\newcommand{\RR}{\mathbb{R}}
\newcommand{\CC}{\mathbb{C}}
\newcommand{\QQ}{\mathbb{Q}}
\newcommand{\NN}{\mathbb{N}}
\newcommand{\ZZ}{\mathbb{Z}}
\newcommand{\EE}{\mathbb{E}}
\newcommand{\Bo}{\mathcal{B}}
\newcommand{\Oo}{\mathcal{O}}
\newcommand{\RE}{ {\rm Re \,} }
\newtheorem{Th}{Theorem}
\newtheorem{Lem}{Lemma}
\newtheorem{Cor}{Corollary}
\newtheorem{Prop}{Proposition}
\theoremstyle{definition}
\newtheorem{Df}{Definition}
\theoremstyle{remark}
\newtheorem{Rem}{Remark}
\newtheorem{Ex}{Example}
\begin{document}

\keywords{linear PDEs with constant coefficients,
formal power series, the Goursat problem, Borel summability.}

\subjclass[2010]{35C10, 35C15, 35E15, 40G10.}
\title[Summable solutions of the Goursat problem for some PDEs with constant coefficients]{Summable solutions of the Goursat problem for some partial differential equations with constant coefficients}

\author{S{\l}awomir Michalik}

\address{Faculty of Mathematics and Natural Sciences,
College of Science\\
Cardinal Stefan Wyszy\'nski University\\
W\'oycickiego 1/3,
01-938 Warszawa, Poland\\
ORCiD: 0000-0003-4045-9548}
\email{s.michalik@uksw.edu.pl}
\urladdr{\url{http://www.impan.pl/~slawek}}

\begin{abstract}
We consider the Goursat problem for linear partial differential equations with constant coefficients in two complex variables.
We find the conditions for summable solutions of the Goursat problem in the case when the Newton polygon has exactly one side with a positive slope.
\end{abstract}

\maketitle

\section{Introduction}
  We study the Goursat problem for linear partial differential equations in two complex variables
\begin{equation}
  \label{eq:goursat_0}
  \left\{
   \begin{array}{l}
    P(\partial_{t},\partial_{z})u(t,z)=f(t,z)\\
     u(t,z)-v(t,z)=O(t^j z^{\alpha}).
   \end{array}
  \right.
 \end{equation}
 
 The general result on the unique solvability of the Goursat problem (\ref{eq:goursat_0}) in given functional space holds under the spectral condition (see \cite{G,H,W}), so in many papers this condition is posed as a~fundamental assumption. Such type results for functions  in Gevrey classes $\Oo[[t]]_s$ one can find for example in
 \cite{Miy2,Miy3,M-H}.

 On the other hand, after the first fundamental example given by Leray \cite{Le}, some papers 
as \cite{Al,Yo1,Yo2,Yo3} were devoted to study the existence and the uniqueness of the solution and the Fredholm property for the problem
(\ref{eq:goursat_0}) without assuming the spectral condition.

In author's opinion the most significant result in this direction was given by Miyake and Yoshino \cite{M-Y}. They employed the Toeplitz operator method to characterise Fredholm property and the unique solvability of the Goursat problem (\ref{eq:goursat_0}) in Gevrey class $\Oo[[t]]_s$ for a general linear operator $P(\partial_t,\partial_z)$ with holomorphic coefficients in a neighbourhood of the origin. It gives a far generalisation of Leray's result \cite{Le}.

In the paper we apply the result of Miyake and Yoshino \cite{M-Y} to study the conditions on the inhomogeneity $f(t,z)$ and the Goursat data $v(t,z)$ under which the solution of (\ref{eq:goursat_0}) is summable. Here we assume that $P(\partial_t,\partial_z)$ is an operator with constant coefficients and its Newton polygon has exactly one side with a positive slope. 

Borel summability of formal solutions of partial differential equations and its generalisations is a problem of significant interest in mathematics and there is a vast literature devoted to this question, see for example \cite{I2,La-M,La-Mi-Su,M-I,Re4,T-Y,Yo4} and references therein.

In the case of the operators with constant coefficients, the problem of characterisation of summable solutions of the Cauchy problem in terms of initial data was completely solved by Balser \cite{B5}
and the author \cite{Mic5}, but up to now there are no results about summable solutions of the Goursat problem. Only Remy \cite{Re3}
considered the Goursat problem in the context of Gevrey index and summability, but his result on summability holds only for solutions of the Cauchy problem.
For this reason one can treat our paper as the first step towards studying summable solutions of the Goursat problem for partial differential equations with constant coefficients.

 In the paper we use previously developed in \cite{Mic5} technique of factorisation of the operator $P(\partial_t,\partial_z)$ by simple
pseudodifferential operators $(\partial_t - \lambda(\partial_z))^l$,
and the deformation of contours in integral representations of solutions of simple pseudodifferential equations. 
The main steps in the proof of our result (Lemmas \ref{le:crucial} and \ref{le:crucial_3}) are based on precise estimations of exponential growth at infinity for analytic continuation of solutions of these equations. To this end we need to control not only the order of exponential growth but also its type.
We also use introduced by Balser and Yoshino \cite{B-Y} moment differential operators $\partial_{\Gamma_q,t}$ for sequence of moments $(m(n))_{n\in\NN}=(\Gamma(1+qn))_{n\in\NN}$, which arise after application of the Borel transform to the solution of the Goursat problem in Gevrey class $\Oo[[t]]_s$ with $q=1+s$.

The paper is organized as follows. First we collect notation and introduce the notion of Gevrey order, summability and moment differentiation $\partial_{\Gamma_q,t}$ with a sequence of moments $(m(n))_{n\in\NN}=(\Gamma(1+qn))_{n\in\NN}$. We also describe algebraic functions, study their behaviour at infinity and use them to construction of moment pseudodifferential operators. Next we discuss the initial and boundary conditions for the Goursat problem and we introduce the result of Miyake and Yoshino \cite{M-I} on the Fredholm property of the Goursat problem in Gevrey spaces. We also formulate their result in terms of the factorisation of the operator $P(\partial_t,\partial_z)$. 
Next we collect fundamental lemmas about the analytic continuation of the solutions of some equations. In the subsequent sections we apply these lemmas to prove the main results of the paper: Theorem \ref{th:3} about the characterisation of analytically continued solutions of the Goursat problem, and Theorem \ref{th:4}, which gives the conditions on summable solutions of the Goursat problem in terms of the analytic continuation property of the Borel transform of the inhomogeneity and the Goursat data.
\bigskip\par
\section{Notation}
$\mathbb{N}$ stands for the set of natural numbers $\{1,2,...\}$ and $\mathbb{N}_0:=\mathbb{N}\cup\{0\}$. We write $\mathbb{R}_+:=(0,+\infty)$ and $\mathbb{C}^*:=\mathbb{C}\setminus\{0\}$.

For any $r>0$, $D^n_r$ (resp. $\overline{D}^n_r$) stands for the open (resp. closed) disc in the complex space $\{z\in\CC^n\colon|z|<r\}$ (resp. $\{z\in\CC:|z|\le r\}$).
To simplify notation, we write $D_r$ (resp. $\overline{D}_r$) instead of $D^1_r$ (resp. $\overline{D}^1_r$).
 If the radius $r$ of the disc $D^n_r$ (resp. $\overline{D}^n_r$, $D_r$, $\overline{D}_r$) is unspecified,
we denote it briefly by $D^n$ (resp. $\overline{D}^n$, $D$, $\overline{D}$).

We write $\mathcal{R}$ for the Riemann surface of the logarithm. Let $\theta>0$ and $d\in\RR$. $S_{d}(\theta)$ denotes the open infinite sector in $\mathcal{R}$ with an opening $\theta$ and in a direction $d$ 
$$S_{d}(\theta):=\left\{z\in\mathcal{R}: |\hbox{arg}(z)-d|<\frac{\theta}{2}\right\}.$$
If the opening of the sector is unspecified we write $S_d$. 

We also write $\hat{S}_{d}(\theta;r):=S_{d}(\theta)\cup D_r$, and $\hat{S}_{d}(\theta)$ (resp. $\hat{S}_{d}$) if $r>0$ (resp. $r>0$ and $\theta>0$) are not specified. 

Let $S$ be the sector $S_d(\theta)$ (resp. the sum of sectors $\bigcup_{d,\theta}S_d(\theta)$).
The symbol $S'\prec S$ describes an infinite sector $S'$ (resp. a sum $S'$ of infinite sectors) with the vertex at the origin satisfying $\overline{S'}\subseteq S$.
Moreover, if additionally $\hat{S}=S\cup D_r$ (i.e. $\hat{S}=\hat{S}_{d}(\theta;r)$ or $\hat{S}=\bigcup_{d,\theta}\hat{S}_d(\theta;r)$) then
$\hat{S'}\prec \hat{S}$ means that $\hat{S'}=S'\cup D_{r'}$, with $S'\prec S$ and $0<r'<r$.

The set $\mathcal{O}(G)$ stands for the set of holomorphic functions on a domain $G\subseteq\CC^n$.
Analogously,
the set of holomorphic functions of the variables $z_1^{1/\kappa_{1}},\dots,z_n^{1/\kappa_n}$
($(\kappa_1,\dots,\kappa_n)\in\NN^n$) on $G$ is denoted by $\mathcal{O}_{1/\kappa_{1},\dots,1/\kappa_n}(G)$.
More generally, if $\EE$ denotes a Banach space with a norm $\|\cdot\|_{\EE}$, then
by $\Oo(G,\EE)$ (resp. $\Oo_{1/\kappa_{1},\dots,1/\kappa_n}(G,\EE)$) we shall denote the set of all $\EE$-valued  
holomorphic functions (resp. holomorphic functions of the variables $z_1^{1/\kappa_{1}},\dots,z_n^{1/\kappa_n}$) 
on a domain $G\subseteq\CC^n$.
For more information about functions with values in Banach spaces we refer the reader to \cite[Appendix B]{B2}. 
In the paper, as a Banach space $\EE$ we will
take the space of complex numbers $\CC$ (we abbreviate $\Oo(G,\CC)$ to $\Oo(G)$ and
$\Oo_{1/\kappa_{1},\dots,1/\kappa_n}(G,\CC)$ to $\Oo_{1/\kappa_{1},\dots,1/\kappa_n}(G)$)
or the space $\Oo$ of continuous functions on $\overline{D}$ and holomorphic on $D_r$ with a maximum norm.
\par
\begin{Df}
Let $S$ be a sector or a sum of sectors and $\hat{S}=S\cup D$. 
A function $u\in\Oo_{1/\kappa}(\hat{S},\EE)$
is of \emph{exponential growth of order at most $K\in\RR$ as $x\to\infty$
in $\hat{S}$} if for any
$\hat{S'}\prec\hat{S}$
there exist 
$A,B<\infty$ such that
\begin{gather*}
\|u(x)\|_{\EE}<Ae^{B|x|^K} \quad \textrm{for every} \quad x\in \hat{S'}.
\end{gather*}
The space of such functions is denoted by $\Oo_{1/\kappa}^K(\hat{S},\EE)$.
\par
Analogously, if $S_1$ and $S_2$ are sectors or sums of sectors then a function $u\in\Oo_{1/\kappa_1,1/\kappa_2}(\hat{S_1}\times\hat{S_2})$
is of \emph{exponential growth of order at most $(K_1,K_2)\in\RR^2$ as $(t,z)\to\infty$
in $\hat{S_1}\times\hat{S_2}$} if
for any
$\hat{S_i'}\prec\hat{S_i}$
($i=1,2$)
there exist $A,B_1,B_2<\infty$ such that
\begin{gather*}
|u(t,z)|<Ae^{B_1|t|^{K_1}}e^{B_2|z|^{K_2}} \quad \textrm{for every} \quad (t,z)\in \hat{S_1'}\times\hat{S_2'}.
\end{gather*}
The space of such functions is denoted by
$\Oo^{K_1,K_2}_{1/\kappa_1,1/\kappa_2}(\hat{S_1}\times\hat{S_2})$.
\end{Df}
\par
The space of formal power series
$ \widehat{u}(x)=\sum_{j=0}^{\infty}u_j x^{j}$ with $u_j\in\mathbb{E}$ is denoted by $\mathbb{E}[[x]]$.
Analogously, the space of formal power series $\widehat{u}(t,z)=\sum_{j,n=0}^{\infty}u_{jn} t^{j}z^{n}$ with
$u_{jn}\in\mathbb{E}$ is denoted by $\mathbb{E}[[t,z]]$.
\bigskip\par
\section{Formal power series: Gevrey order, summability and moment differentiation}
In this section we introduce some definitions and fundamental facts connected with $k$-summability and $\Gamma_q$-moment differentiation. For more details about summability we refer the reader to \cite{B2,LR}.
\begin{Df}
 Let $s\in\RR$. We say that $\widehat{u}(x)=\sum_{n=0}^{\infty}u_nx^n\in\EE[[x]]$ is a \emph{formal power series of Gevrey order $s$} if there exists $A,B<\infty$ such that
 \begin{gather*}
  \|u_n\|_{\EE}\leq A B^n n!^s\quad\textrm{for every}\quad n\in\NN_0.
 \end{gather*}
The space of such series is denoted by $\EE[[x]]_s$.
\end{Df}

\begin{Rem}
 If $\widehat{u}(x)\in\EE[[x]]_s$ and $s\leq 0$ then $\widehat{u}(x)$ is convergent and its sum $u(x)$ is well defined. Moreover $u(x)\in\Oo(D,\EE)$ for $s=0$ and $u(x)\in\Oo^{-1/s}(\CC,\EE)$ for $s<0$. 
\end{Rem}

\begin{Df}
 Let $s\geq 0$ and $\Gamma(\cdot)$ be the gamma function. The linear operator $\Bo_s\colon\EE[[t]]\to\EE[[t]]$
 defined by
 $$
 \Bo_s\Big(\sum_{n=0}^{\infty}\frac{u_n}{n!}t^n\Big):=\sum_{n=0}^{\infty}\frac{u_n}{\Gamma(1+(s+1)n)}t^n
 $$
 is called a \emph{modified Borel transform of order $s$}.
\end{Df}

\begin{Rem}
\label{re:convergent}
 Observe that $\widehat{u}(t)\in\EE[[t]]_s$ if and only if its modified Borel transform $\Bo_s\widehat{u}(t)$ is convergent in some complex neighbourhood of the origin.
\end{Rem}

\begin{Df}
 Let $s>0$, $k=1/s$, $d\in\RR$. We say that a formal power series 
 $\widehat{u}(t)\in\EE[[t]]$ is \emph{$k$-summable in a direction $d$} if there exists $\hat{S}_d$ such that $\Bo_s\widehat{u}(t)$
 is convergent and its sum belongs to the space $\Oo^{k}(\hat{S}_d,\EE)$.
\end{Df}

\begin{Rem}
 If $\widehat{u}(t)$ is $k$-summable in some direction $d$ then
 by Remark \ref{re:convergent} $\widehat{u}(t)\in\EE[[t]]_s$.
\end{Rem}

\begin{Df}
 Let $q>0$. The linear operator $\partial_{\Gamma_q,t}\colon \EE[[t]]\to\EE[[t]]$ defined by
 $$
 \partial_{\Gamma_q,t}\Big(\sum_{n=0}^{\infty}\frac{u_n}{\Gamma(1+qn)}t^n\Big):=\sum_{n=0}^{\infty}\frac{u_{n+1}}{\Gamma(1+qn)}t^n
 $$
 is called a \emph{$\Gamma_q$-moment differentiation}.
\end{Df}
\begin{Rem}
 Observe that $\Gamma_1$-moment differentiation $\partial_{\Gamma_1,t}$ coincides with the standard differentiation $\partial_t$ on the space of formal power series $\EE[[t]]$.
\end{Rem}
\begin{Rem}
 \label{re:5}
 Direct calculation shows that modified Borel transform and moment differentiation satisfy the following commutation type formula
 \begin{gather*}
 \Bo_s\partial_t \widehat{u}(t)=\partial_{\Gamma_q,t}\Bo_s\widehat{u}(t)
 \quad\textrm{for every}\quad
 \widehat{u}(t)\in\EE[[t]],
 \end{gather*}
  where $s\geq 0$ and $q=1+s$.
\end{Rem}
\bigskip\par
\section{Algebraic functions and moment pseudodifferential operators}
In this section we collect some facts about algebraic functions and next we introduce moment pseudodifferential operators for such functions.

Let $\lambda(\zeta)$ be an algebraic function on $\CC$. It means that there exists a polynomial $P(\lambda,\zeta)$ of two complex variables such that the function $\lambda(\zeta)$ satisfies equation $P(\lambda(\zeta),\zeta)=0$. By the implicit function theorem the function $\lambda(\zeta)$ is holomorphic on $\CC$ but a finite number of singular or branching points. Moreover this function has a moderate growth at infinity. More precisely there exist a \emph{pole order at infinity}
$q\in\QQ$ and a \emph{leading term} $\lambda\in\CC^*$
such that 
$$\lim_{\zeta\to\infty}\frac{\lambda(\zeta)}{\zeta^q}=\lambda.$$
We denote it shortly by $\lambda(\zeta)\sim\lambda\zeta^q$.

Hence there exists $r_0<\infty$ and $\kappa\in\NN$ such that $\lambda(\zeta)$ is a holomorphic function of the variable $\xi=\zeta^{1/\kappa}$ for $|\zeta|>r_0$ with a pole at infinity. It means that the function
$\xi\mapsto \lambda(\xi^{\kappa})$ has the Laurent series expansion $\lambda(\xi^{\kappa})=\sum_{j=-n}^{\infty}\frac{a_j}{\xi^j}$ at infinity for some coefficients $a_j\in\CC$ with $a_{-n}=\lambda$ and $n=q\kappa\in\ZZ$. This expansion is convergent for $|\xi|>r_0^{1/\kappa}$ with a pole of order $n$ at infinity.

We will show 
\begin{Lem}
\label{le:inverse}
 If $\lambda(\zeta)$ is an algebraic function such that $\lambda(\zeta)\sim \lambda\zeta$ for some $\lambda\in\CC^*$  then there exists $\kappa\in\NN$ and $r_0>0$ such that the function $\tilde{\lambda}(\zeta):=\Big(\lambda(\zeta^{\kappa})\Big)^{1/\kappa}$ is holomorphic and invertible for $|\zeta|>r_0$ with a simple pole at infinity.
\end{Lem}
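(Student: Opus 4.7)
The plan is to combine the Laurent expansion from the paragraph preceding the lemma with a direct factorisation, and then to invoke the inverse function theorem at infinity.

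Applying that expansion to the hypothesis $\lambda(\zeta)\sim\lambda\zeta$ (so $q=1$), I get $\kappa\in\NN$ and $r_0>0$ such that on $\{|\xi|>r_0^{1/\kappa}\}$,
\begin{equation*}
\lambda(\xi^\kappa)=\sum_{j=-\kappa}^{\infty}\frac{a_j}{\xi^j}=\xi^\kappa\Big(\lambda+\sum_{k=1}^{\infty}a_{k-\kappa}\xi^{-k}\Big),
\end{equation*}
with $a_{-\kappa}=\lambda\neq 0$. Writing $h(\zeta):=\lambda+\sum_{k=1}^{\infty}a_{k-\kappa}\zeta^{-k}$, this rewrites as $\lambda(\zeta^\kappa)=\zeta^\kappa h(\zeta)$ on $\{|\zeta|>r_0^{1/\kappa}\}$.

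Since $h(\zeta)\to\lambda\in\CC^*$ as $\zeta\to\infty$, I would enlarge $r_0$ so that $h$ takes values in a small simply connected open disc around $\lambda$ that avoids $0$. On such a disc a single-valued holomorphic branch of $z\mapsto z^{1/\kappa}$ exists, so composition yields a holomorphic function $h(\zeta)^{1/\kappa}$ on $\{|\zeta|>r_0\}$. Setting
\begin{equation*}
\tilde\lambda(\zeta):=\zeta\cdot h(\zeta)^{1/\kappa},
\end{equation*}
I obtain a holomorphic branch of $(\lambda(\zeta^\kappa))^{1/\kappa}$ on $\{|\zeta|>r_0\}$; the remaining $\kappa-1$ branches differ from it by a $\kappa$-th root of unity. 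The ratio $\tilde\lambda(\zeta)/\zeta\to\lambda^{1/\kappa}\neq 0$ immediately gives a simple pole at infinity.

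For invertibility I would pass to the coordinate $w=1/\zeta$ and consider
\begin{equation*}
F(w):=\frac{1}{\tilde\lambda(1/w)}=\frac{w}{h(1/w)^{1/\kappa}}.
\end{equation*}
Since $h(1/w)=\lambda+\sum_{k\geq 1}a_{k-\kappa}w^k$ is holomorphic at $w=0$ with value $\lambda\neq 0$, the function $F$ extends holomorphically to $w=0$ with $F(0)=0$ and $F'(0)=\lambda^{-1/\kappa}\neq 0$. The holomorphic inverse function theorem then produces a local biholomorphism between neighbourhoods of $0$, and translating back through $\zeta=1/w$ shows that $\tilde\lambda$ maps $\{|\zeta|>r_0'\}$ biholomorphically onto a neighbourhood of infinity, at the cost of possibly enlarging $r_0$ once more.

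The principal obstacle is choosing a single-valued branch of $z^{1/\kappa}$ in the definition of $\tilde\lambda$. This is precisely where the hypothesis $\lambda\neq 0$ is essential: it permits restricting $h$ to a small neighbourhood of $\lambda$ that avoids the branch point at the origin. Once that branch is fixed, the simple-pole structure and the invertibility reduce to standard facts about holomorphic germs at $0$.
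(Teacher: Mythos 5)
Your proof is correct and follows essentially the same route as the paper: both factor $\lambda(\zeta^{\kappa})=\zeta^{\kappa}g(\zeta)$ via the Laurent expansion at infinity, take a single-valued holomorphic $\kappa$-th root of $g$ near infinity (possible because $g\to\lambda\neq 0$), and deduce the simple pole and invertibility. The only cosmetic difference is that you apply the inverse function theorem at $w=0$ after the substitution $w=1/\zeta$, while the paper invokes it directly at infinity from $\lim_{\zeta\to\infty}\tilde{\lambda}'(\zeta)=\lambda^{1/\kappa}\neq 0$; your variant is, if anything, a slightly more careful way of getting invertibility on a full neighbourhood of infinity.
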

\begin{proof}
Since $\lambda(\zeta)$ is an algebraic function, it is a holomorphic function with a finite number of singular points in $\CC\cup\{\infty\}$, in which the function has algebraic poles or algebraic branching points. It means that there exists $\tilde{r}_0>0$ and $\kappa\in\NN$ such that the function $\zeta\mapsto\lambda(\zeta^{\kappa})$ is holomorphic for $|\zeta|>\tilde{r}_0$ with a pole of order $\kappa$ at infinity. Hence, by the Laurent series expansion
$$
\lambda(\zeta^{\kappa})=\sum_{n=-\kappa}^{\infty}\frac{a_n}{\zeta^n}=
\zeta^{\kappa}\sum_{n=0}^{\infty}\frac{a_{n-\kappa}}{\zeta^n}=:\zeta^{\kappa}g(\zeta)\quad\textrm{with}\quad a_{-\kappa}=\lambda\in\CC^*,
$$
where $g(\zeta)$ is holomorphic in a complex neighbourhood of infinity (for $|\zeta|>\tilde{r}_0$) such that $\lim\limits_{z\to\infty}g(z)= \lambda\in\CC^*$.
It means that $g(\zeta)$ has a holomorphic branch of $\kappa$-th root of $g(\zeta)$ at a neighbourhood of infinity.

Hence the function $\tilde{\lambda}(\zeta)$ is holomorphic with a simple pole at infinity and its Laurent expansion is given by
$$
\tilde{\lambda}(\zeta)=\zeta g(\zeta)^{1/\kappa}=\lambda^{1/\kappa}\zeta
+\sum_{n=0}^{\infty}\frac{b_n}{\zeta^n}.
$$
Since additionally $\lim\limits_{\zeta\to\infty}\frac{d}{d\zeta}\tilde{\lambda}(\zeta)=\lambda^{1/\kappa}\neq 0$, by the inverse function theorem we conclude that the function $\tilde{\lambda}(\zeta)$ is invertible for
$|\zeta|>r_0$ (for some sufficiently large $r_0>\tilde{r}_0$) and its inversion
$\tilde{\lambda}^{-1}(\zeta)$ is also holomorphic in a complex neighbourhood of infinity (say, for some $r_1>0$ and for every 
$|\zeta|>r_1$) and $\tilde{\lambda}^{-1}(\zeta)\sim\lambda^{-1/\kappa}\zeta$.
\end{proof}

\begin{Rem}
 The above lemma allows us to replace an algebraic function $\lambda(\zeta)$ with an algebraic branching point of order $\kappa$ at infinity by
 the algebraic function $\tilde{\lambda}(\zeta)$ with a simple pole at infinity. It will be used in Lemma \ref{le:crucial_2}.
\end{Rem}
\bigskip\par
To construct $\Gamma_{1/p}$-moment pseudodifferential operators,
first we recall the integral representation of 
$\partial^n_{\Gamma_{1/p},z}\varphi(z)$
\begin{Prop}[{\cite[Proposition 3]{Mic7}}]
Let $\varphi(z)\in\Oo(D_r)$ and $p\in\NN$. Then for every $|z|<\varepsilon<r$ and $n\in\NN$ we have
\begin{gather}
\label{eq:integral_representation}
 \partial^n_{\Gamma_{1/p},z}\varphi(z)=\frac{1}{2\pi i}\oint_{|w|=\varepsilon}\varphi(w)\int_{0}^{e^{i\theta}\infty}\zeta^n\mathbf{E}_{1/p}(z\zeta)p(w\zeta)^{p-1}e^{-(\zeta w)^{p}}\,d\zeta dw,
\end{gather}
where $\theta\in(-\arg w -\frac{\pi}{2p}, -\arg w +\frac{\pi}{2p})$ and $\mathbf{E}_{1/p}(z):=\sum_{n=0}^{\infty}\frac{z^n}{\Gamma(1+n/p)}$ is the Mittag-Leffler function of index $1/\kappa$.
\end{Prop}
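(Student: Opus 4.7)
The plan is to expand the Mittag--Leffler function as a power series in $z\zeta$, evaluate each resulting inner integral by a change of variable, then apply Cauchy's integral formula to read off the Taylor coefficients of $\varphi$, and finally recognise the outcome as $\partial^n_{\Gamma_{1/p},z}\varphi(z)$.

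Writing $\mathbf{E}_{1/p}(z\zeta) = \sum_{k\ge 0}(z\zeta)^k/\Gamma(1+k/p)$ and (temporarily) interchanging the sum with the $\zeta$-integral reduces the task to evaluating
\[
I_m(w) := \int_{0}^{e^{i\theta}\infty}\zeta^{m}\,p(w\zeta)^{p-1}e^{-(\zeta w)^{p}}\,d\zeta, \qquad m = n+k.
\]
The substitution $u = (\zeta w)^{p}$ gives $p(w\zeta)^{p-1}\,d\zeta = du/w$ and $\zeta^{m} = u^{m/p}/w^{m}$, and maps the ray $\arg\zeta = \theta$ to the ray $\arg u = p(\theta+\arg w)\in(-\pi/2,\pi/2)$ by the hypothesis on $\theta$. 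Cauchy's theorem then lets one deform the $u$-contour back to the positive real axis, producing
\[
I_m(w) = \frac{1}{w^{m+1}}\int_{0}^{\infty}u^{m/p}e^{-u}\,du = \frac{\Gamma(1+m/p)}{w^{m+1}}.
\]

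Plugging this back and swapping the $w$-contour with the sum in $k$ (uniformly convergent on $|w|=\varepsilon$), the right-hand side of (\ref{eq:integral_representation}) reduces to
\[
\sum_{k=0}^{\infty}\frac{\Gamma(1+(n+k)/p)}{\Gamma(1+k/p)}\,z^{k}\cdot\frac{1}{2\pi i}\oint_{|w|=\varepsilon}\frac{\varphi(w)}{w^{n+k+1}}\,dw = \sum_{k=0}^{\infty}\frac{\Gamma(1+(n+k)/p)}{\Gamma(1+k/p)}\,a_{n+k}\,z^{k},
\]
where $a_j = \varphi^{(j)}(0)/j!$ are the Taylor coefficients of $\varphi$. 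Writing $\varphi(z)=\sum_j \bigl(a_j\Gamma(1+j/p)\bigr)/\Gamma(1+j/p)\cdot z^j$ and iterating the defining recursion of $\partial_{\Gamma_{1/p},z}$ now shows that this last series is precisely $\partial^{n}_{\Gamma_{1/p},z}\varphi(z)$, as claimed.

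The only delicate point is justifying the first interchange, since $\mathbf{E}_{1/p}(x)$ has exponential growth of order $p$ in the sector $|\arg x|<\pi/(2p)$. Along the chosen ray the factor $e^{-(\zeta w)^{p}}$ decays like $\exp(-\cos(p(\theta+\arg w))|\zeta w|^{p})$, and I would first carry out the argument for $\theta=-\arg w$, where the decay rate $\varepsilon^{p}|\zeta|^{p}$ dominates any $\mathbf{E}_{1/p}(z\zeta)$-growth of order at most $|z|^{p}|\zeta|^{p}$ (here one uses precisely $|z|<\varepsilon=|w|$). The remaining directions $\theta$ in the stated interval are then covered by observing that $I_m(w)$ is independent of $\theta$ on this range, via Cauchy's theorem applied between two admissible rays. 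I expect this convergence and contour-deformation bookkeeping to be the only real obstacle; the substantive content is the moment identity $I_m(w)=\Gamma(1+m/p)/w^{m+1}$, which simply encodes the fact that $(\Gamma(1+n/p))_{n\in\NN}$ is the moment sequence of the kernel $pt^{p-1}e^{-t^p}$ on $[0,\infty)$.
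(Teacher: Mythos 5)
This proposition is quoted from \cite[Proposition 3]{Mic7} and the present paper gives no proof of it, so there is nothing internal to compare against; judged on its own, your argument is correct and is the natural (and, in the cited source, essentially the actual) derivation: expand $\mathbf{E}_{1/p}(z\zeta)$, use the moment identity $\int_0^{\infty(\theta)}\zeta^{m}p(w\zeta)^{p-1}e^{-(\zeta w)^p}\,d\zeta=\Gamma(1+m/p)/w^{m+1}$, read off Taylor coefficients by Cauchy's formula, and match the series defining $\partial^n_{\Gamma_{1/p},z}$. Your handling of the delicate point is also the right one: absolute convergence (and hence the sum--integral interchange) is guaranteed for $\theta=-\arg w$, or $\theta$ sufficiently close to it, precisely because $|z|<\varepsilon=|w|$, while for $\theta$ near the endpoints of the stated interval the full integral need not converge absolutely for all $z$, so the formula is to be understood for admissible choices of $\theta$, exactly as your contour-rotation remark indicates.
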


Using the formula (\ref{eq:integral_representation}) for any algebraic function $\lambda(\zeta)$, which is also holomorphic with respect to $\zeta$ for $|\zeta|\geq r_0$, one can define a moment pseudodifferential operator $\lambda(\partial_{\Gamma_{1/p},z})\colon\Oo(D)\to\Oo(D)$ as (see \cite[Definition 8]{Mic7})
\begin{gather*}
 \lambda(\partial_{\Gamma_{1/p},z})\varphi(z):=\frac{1}{2\pi i}\oint_{|w|=\varepsilon}\varphi(w)\int_{e^{i\theta}r_0}^{e^{i\theta}\infty}\lambda(\zeta)\mathbf{E}_{1/p}(z\zeta)p(w\zeta)^{p-1}e^{-(\zeta w)^{p}}\,d\zeta dw
\end{gather*}
for $\theta\in(-\arg w -\frac{\pi}{2p}, -\arg w +\frac{\pi}{2p})$.

We extend this definition to the case when $\lambda(\zeta)$ is a general algebraic function, i.e. holomorphic of the variable $\xi=\zeta^{1/\kappa}$ for $|\zeta|\geq r_0$ (for some $\kappa\in\NN$ and $r_0>0$). Since by \cite[Lemma 3]{Mic7} we see that
$(\partial_{\Gamma_{1/p},z}\varphi)(z^{\kappa})=\partial^{\kappa}_{\Gamma_{1/p\kappa},z}(\varphi(z^{\kappa}))$, the operator $\lambda(\partial_{\Gamma_{1/p},z})$ should satisfy the formula
\begin{gather*}
 (\lambda(\partial_{\Gamma_{1/p},z})\varphi)(z^{\kappa})=
 \lambda(\partial^{\kappa}_{\Gamma_{1/p\kappa},z})(\varphi(z^{\kappa}))\quad\textrm{for every}\quad\varphi(z)\in\Oo_{1/\kappa}(D).
\end{gather*}
For this reason we define

\begin{Df}[see {\cite[Definition 13]{Mic8}}]
Let $p\in\NN$ and $\lambda(\zeta)$ be a holomorphic function of the variable $\xi=\zeta^{1/\kappa}$ for $|\zeta|\geq r_0$ (for some $\kappa\in\NN$ and $r_0>0$) and of moderate growth at infinity. A \emph{moment pseudodifferential operator} $\lambda(\partial_{\Gamma_{1/p},z})\colon \Oo_{1/\kappa}(D)\to\Oo_{1/\kappa}(D)$ is defined by
\begin{gather}
\label{eq:lambda}
\lambda(\partial_{\Gamma_{1/p},z})\varphi(z):=\frac{1}{2\kappa\pi i}\oint^{\kappa}_{|w|=\varepsilon}\varphi(w)\int_{e^{i\theta}r_0}^{e^{i\theta}\infty}\lambda(\zeta)\mathbf{E}_{1/p\kappa}(z^{1/\kappa}\zeta^{1/\kappa})p(w\zeta)^{p-1}e^{-(\zeta w)^{p}}\,d\zeta dw
\end{gather}
for every $\varphi(z)\in\Oo_{1/\kappa}(D_r)$ and $|z|<\varepsilon<r$, where $\theta\in(-\arg w -\frac{\pi}{2p}, -\arg w +\frac{\pi}{2p})$ and $\oint^{\kappa}_{|w|=\varepsilon}$ means that we integrate $\kappa$ times along the positively oriented circle of radius $\varepsilon$.
\end{Df}
We show
\begin{Prop}
 \label{pr:2}
 The right-hand side of (\ref{eq:lambda}) does not depend on the choice of the number $r_0$ such that $\lambda(\zeta)$ is holomorphic for $|\zeta|\geq r_0$.
\end{Prop}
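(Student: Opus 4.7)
The plan is to compare the right-hand side of (\ref{eq:lambda}) for two admissible radii $r_0<r_0'$ and show that their difference vanishes. Since $\lambda$ is holomorphic on $|\zeta|\geq r_0$ it is also holomorphic on the annulus $r_0\leq|\zeta|\leq r_0'$, so both expressions are well defined, and subtracting one from the other gives
\begin{gather*}
\Delta=\frac{1}{2\kappa\pi i}\oint^{\kappa}_{|w|=\varepsilon}\varphi(w)\int_{e^{i\theta}r_0}^{e^{i\theta}r_0'}\lambda(\zeta)\mathbf{E}_{1/p\kappa}(z^{1/\kappa}\zeta^{1/\kappa})p(w\zeta)^{p-1}e^{-(\zeta w)^{p}}\,d\zeta\,dw,
\end{gather*}
where the inner $\zeta$-integral is now over a bounded segment and the entire integrand is jointly continuous on a compact set.

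The main step is to exchange the two contour integrations. For each fixed $w$ the value of the $\zeta$-integral does not depend on which $\theta\in(-\arg w-\frac{\pi}{2p},-\arg w+\frac{\pi}{2p})$ is chosen, since the integrand is holomorphic in $\zeta$ on the annulus and Cauchy's theorem permits free rotation of the contour there. Covering the $w$-circle by finitely many closed arcs on each of which a common $\theta$ is admissible, I fix that common $\theta$ on each arc, which removes the $w$-dependence of the $\zeta$-contour locally; Fubini's theorem then applies on each compact piece, and the overlaps contribute nothing thanks to the $\theta$-independence just noted. This is the delicate point of the argument.

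After the swap, for every fixed $\zeta$ in the segment the inner integral reduces to
\begin{gather*}
\lambda(\zeta)\mathbf{E}_{1/p\kappa}(z^{1/\kappa}\zeta^{1/\kappa})\oint^{\kappa}_{|w|=\varepsilon}\varphi(w)\,p(w\zeta)^{p-1}e^{-(\zeta w)^{p}}\,dw.
\end{gather*}
The integrand here is holomorphic in $w^{1/\kappa}$ on $D_r$: $\varphi\in\Oo_{1/\kappa}(D_r)$ by hypothesis, while $p(w\zeta)^{p-1}e^{-(\zeta w)^p}$ is entire in $w$ for each fixed $\zeta$. The substitution $u=w^{1/\kappa}$ turns the $\kappa$-fold loop into the single positively oriented loop $|u|=\varepsilon^{1/\kappa}$ of a function holomorphic inside, so Cauchy's theorem forces this loop integral to vanish. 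Consequently the inner integral is zero for every $\zeta$ in the segment, which gives $\Delta=0$ and proves the independence of $r_0$.
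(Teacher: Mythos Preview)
Your strategy coincides with the paper's: take two admissible radii, subtract, and show the resulting $w$-loop integral vanishes by Cauchy. The last step---that for fixed $\zeta$ the integrand $\varphi(w)\,p(w\zeta)^{p-1}e^{-(\zeta w)^p}$ is holomorphic in $w^{1/\kappa}$ on $D_r$, so the $\kappa$-fold loop integral is zero---is correct and is exactly the mechanism the paper exploits.

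The gap is in the swap. After splitting the $\kappa$-fold $w$-circle into arcs $A_j$, each carrying a fixed admissible $\theta_j$, and applying Fubini arc by arc, what you actually obtain is
\[
\Delta=\sum_j\int_{e^{i\theta_j}r_0}^{e^{i\theta_j}r_0'}\Bigl(\int_{A_j}\varphi(w)\,\lambda(\zeta)\mathbf{E}_{1/p\kappa}(z^{1/\kappa}\zeta^{1/\kappa})p(w\zeta)^{p-1}e^{-(\zeta w)^p}\,dw\Bigr)d\zeta.
\]
The inner integral here runs only over the arc $A_j$, not over the full circle, and different arcs sit over \emph{different} $\zeta$-rays. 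Because the integrand depends on $\zeta^{1/\kappa}$ (through $\lambda$ and the Mittag-Leffler factor) these rays cannot be rotated to a common ray without monodromy, so there is no legitimate passage from the display above to a single $\oint^{\kappa}_{|w|=\varepsilon}(\cdots)\,dw$ at one common $\zeta$. The $\theta$-independence you invoke only says that for each fixed $w$ the $\zeta$-integral is insensitive to the admissible choice of $\theta$; it does not let you reassemble the arc integrals at a single $\zeta$.

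The paper avoids this by not swapping. It shows directly that the finite-segment $\zeta$-integral
\[
F(w):=\int_{r_1e^{i\theta}}^{r_2e^{i\theta}}\lambda(\zeta)\mathbf{E}_{1/p\kappa}(z^{1/\kappa}\zeta^{1/\kappa})p(w\zeta)^{p-1}e^{-(\zeta w)^p}\,d\zeta
\]
defines a holomorphic function of $w$ on $D_r\setminus\{0,z\}$ (locally one fixes $\theta$; your $\theta$-independence is precisely what glues the local definitions) which extends continuously, hence holomorphically, to all of $D_r$. Then $\varphi(w)F(w)\in\Oo_{1/\kappa}(D_r)$ and Cauchy gives $\oint^{\kappa}_{|w|=\varepsilon}\varphi(w)F(w)\,dw=0$ directly. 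This is exactly your intended conclusion, reached without the problematic interchange; replacing your Fubini paragraph by this holomorphy-in-$w$ argument repairs the proof.
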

\begin{proof}
 We take $r_1\leq r_2$ such that $\lambda(\zeta)$ is holomorphic
 of the variable $\xi=\zeta^{1/\kappa}$ for $|\zeta|\geq r_1$.
 
Observe that for every fixed
 $z\in D_{\varepsilon}$ the function
 \begin{gather}
 \label{eq:function_w}
  w\longmapsto \int_{r_1e^{i\theta}}^{r_2e^{i\theta}}\lambda(\zeta)\mathbf{E}_{1/p\kappa}(z^{1/\kappa}\zeta^{1/\kappa})p(w\zeta)^{p-1}e^{-(\zeta w)^{p}}\,d\zeta
 \end{gather}
 is holomorphic for $w\in D_r\setminus\{0,z\}$ and $\theta\in(-\arg w -\frac{\pi}{2p}, -\arg w +\frac{\pi}{2p})$.
 Moreover, the function (\ref{eq:function_w}) extends continuously to the whole disc $D_r$. Hence, by the Cauchy integral formula
 \begin{gather*}
  \oint^{\kappa}_{|w|=\varepsilon}\varphi(w)\int_{e^{i\theta}r_1}^{e^{i\theta}r_2}\lambda(\zeta)\mathbf{E}_{1/p\kappa}(z^{1/\kappa}\zeta^{1/\kappa})p(w\zeta)^{p-1}e^{-(\zeta w)^{p}}\,d\zeta dw=0,
 \end{gather*}
so the right-hand side of (\ref{eq:lambda}) is independent of the choice of $r_0$.
\end{proof}
\begin{Rem}
 By Proposition \ref{pr:2} the value of $\lambda(\partial_{\Gamma_{1/p},z})\varphi(z)$ depends only on $\varphi(z)$ and on the behaviour of the algebraic function $\lambda(\zeta)$ at a neighbourhood of infinity.
\end{Rem}
\bigskip\par
\section{Conditions in the Goursat problem}
In this section we discuss the initial and boundary conditions in the Goursat problem
\begin{equation}
\label{eq:goursat_1}
  \left\{
   \begin{array}{l}
    P(\partial_{t},\partial_{z})u(t,z)=f(t,z)\in\CC[[t,z]]\\
    \partial_t^k u(0,z)=\varphi_k(z)\in\CC[[z]],\ \ k=0,...,j-1\\
    \partial_z^{\beta} u(t,0)=\psi_{\beta}(t)\in\CC[[t]],\ \ \beta=0,...,\alpha-1.
   \end{array}
  \right.
 \end{equation}
The initial and boundary data have to satisfy the following compatibility conditions for $k=0,...,j-1$ and $\beta=0,...,\alpha-1$:
\begin{equation}
\label{eq:conditions}
\varphi_k^{(\beta)}(0)=\psi_{\beta}^{(k)}(0)=\partial_t^k\partial_z^{\beta}u(0,0)=:c_{k\beta}.
\end{equation}
Observe that for any initial and boundary data satisfying (\ref{eq:conditions}) we may always find the formal power series
$v(t,z)$ with the same initial and boundary conditions as $u(t,z)$ in
(\ref{eq:goursat_1})
\begin{equation}
 \label{eq:cond_v}
  \left\{
   \begin{array}{l}
    \partial_t^k v(0,z)=\varphi_k(z),\ \ k=0,...,j-1\\
    \partial_z^{\beta} v(t,0)=\psi_{\beta}(t),\ \ \beta=0,...,\alpha-1.
   \end{array}
  \right.
 \end{equation}
Precisely, $v(t,z)\in\CC[[t,z]]$ satisfies the conditions (\ref{eq:cond_v}) if and
only if the formal power series $v(t,z)$ is given by
\begin{equation}
\label{eq:any_v}
v(t,z)=\sum_{k=0}^{j-1}\frac{\varphi_k(z)}{k!}t^k+
\sum_{\beta=0}^{\alpha-1}\frac{\psi_{\beta}(t)}{\beta!}z^{\beta}-\sum_{k=0}^{j-1}\sum_{\beta=0}^{\alpha-1}\frac{c_{k\beta}}{k!\beta!}t^kz^{\beta}+t^k z^{\alpha} r(t,z)
\end{equation}
for some $r(t,z)\in\CC[[t,z]]$.
\begin{Df}
 A formal power series $v(t,z)\in\CC[[t,z]]$ satisfying (\ref{eq:any_v}) with $r(t,z)=0$ will be called a \emph{Goursat data} for the problem (\ref{eq:goursat_1}).
\end{Df}
Using a Goursat data we can write the problem (\ref{eq:goursat_1}) as
\begin{equation*}
  \left\{
   \begin{array}{l}
    P(\partial_{t},\partial_{z})u(t,z)=f(t,z)\\
     u(t,z)-v(t,z)=O(t^j z^{\alpha}),
   \end{array}
  \right.
 \end{equation*}
where the condition $u(t,z)-v(t,z)=O(t^j z^{\alpha})$ means that 
$$
\frac{u(t,z)-v(t,z)}{t^j z^{\alpha}}\in\CC[[t,z]].
$$
\bigskip\par
\section{Fredholm property of the Goursat problem}
In this section we recall the important result of Miyake and Yoshino \cite{M-Y} about the Fredholm property of the
Goursat problem in Gevrey spaces. We present this result in the
special case of equations with constant coefficients.

Namely,
we consider the Goursat problem for general linear partial differential equations with constant coefficients in Gevrey spaces $\Oo[[t]]_s$ for fixed $s\geq 0$:
\begin{equation}
  \label{eq:goursat}
  \left\{
   \begin{array}{l}
    P(\partial_{t},\partial_{z})u(t,z)=f(t,z)\in\Oo[[t]]_s\\
     u(t,z)-v(t,z)=O(t^j z^{\alpha}),\ \ v(t,z)\in\Oo[[t]]_s,
   \end{array}
  \right.
 \end{equation}
where 
\begin{equation*}
 P(\partial_t,\partial_z):=\sum_{(j,\alpha)\in\Lambda}a_{j\alpha}\partial_t^j\partial_z^{\alpha}
\end{equation*}
and $\Lambda\subseteq\NN_0\times\NN_0$ is a finite set of indices.

To formulate the main result of Miyake and Yoshino \cite{M-Y}, first we define a Newton polygon $N(P)$ for the operator $P(\partial
_t,\partial_z)$ as the convex hull of the union of sets $Q(j+\alpha,-j)$ for $(j,\alpha)\in\Lambda$
$$N(P):={\rm conv\,}\{Q(j+\alpha,-j)\colon\ (j,\alpha)\in \Lambda,\ a_{j\alpha}\neq 0\},$$
where $Q(a,b):=\{(x,y)\in\RR^2\colon\ x\leq a,\ y\geq b\}$.

The Newton polygon is a classical tool introduced in \cite{Y} and representing properties of partial differential operators in a geometric way.

For a given $s\geq 0$, we draw a line $L_s$ with slope $k=1/s\in\RR_+\cup\{\infty\}$ such that 
$N_s:=N(P)\cap L_s$ is a nonempty set contained in
$\partial N(P)$.

Next, let 
$$\mathring{N}_s:=\{(j,\alpha)\in\Lambda\colon\ a_{j\alpha}\neq 0,\ (j+\alpha,-j)\in N_s\}.
$$

\begin{Df}
The \emph{principal part} $P_s(\partial_t,\partial_z)$ and the \emph{Toeplitz symbol} $f_s(z)$ associated with the Gevrey index $s$ are defined by
$$P_s(\partial_t,\partial_z):=\sum_{(j,\alpha)\in\mathring{N}_s}a_{j\alpha}\partial_t^j\partial_z^{\alpha}\quad\textrm{and}\quad
f_s(z):=\sum_{(j,\alpha)\in\mathring{N}_s}a_{j\alpha}z^{-j}=P_s(z^{-1},1).
$$
\end{Df}

\begin{Df}
Let $f(z)=\sum_{j=-m}^nf_jz^j\in\CC[z,z^{-1}]$. An infinite matrix $T_f$ defined by $T_f:=(f_{j-k})_{j,k\in\NN_0}$ is said to be a \emph{Toeplitz matrix with symbol $f(z)$}. Similarly, 
an \emph{$N$-finite section Toeplitz matrix $T_f(N)$ with symbol $f(z)$} is defined by $T_f(N):=(f_{j-k})_{j,k=0,1,\dots,N}$ for any $N\in\NN_0$. 
\end{Df}

For given $w,R>0$ we introduce the Banach space $G^s_w(R)$ associated with $\Oo[[t]]_s$ as follows
\begin{gather*}
G^s_w(R):=\big\{u(t,z)=\sum_{k,\beta=0}^{\infty}u_{k\beta}t^{k}z^{\beta}/k!\beta!\in\Oo[[t]]_s\colon\ 
\|u\|^{(s)}_{w,R}:=\sum_{k,\beta=0}^{\infty}|u_{k\beta}|w^{k}R^{sk+\beta}/(sk+\beta)!<\infty\big\}.
\end{gather*}

\begin{Rem}
Observe that for any $w>0$ and any $u(t,z)\in\Oo[[t]]_s$ there exists sufficiently small $R_0>0$ such that $u(t,z)\in G^s_w(R)$
for any $0<R\leq R_0$.
\end{Rem}

\begin{Df}
 We say that an operator $L(\partial_t,\partial_z)$ \emph{has Fredholm property on $G^s_w$} if
 there exists $R_0>0$ such that  for any
$0<R\leq R_0$ the operator
$L(\partial_t,\partial_z)\colon G^s_w(R)\to G^s_w(R)$
is a Fredholm operator on $G^s_w(R)$ with the index equal to zero (i.e. $L(\partial_t,\partial_z)$ has the same finite dimensional kernel and cokernel). 
\end{Df}

We have
\begin{Th}[{\cite[Theorem 0]{M-Y}}]
\label{th:1}
Suppose $w>0$, $\mathring{N}_s\neq \emptyset$, $(j,\alpha)\in{\rm conv\,}\{\mathring{N}_s\}$ and $f(z)=f_s(z)z^j$.
Then the condition
\begin{equation}
 \label{eq:H_w}
 \tag{$H_w$}
 f_s(z)\neq 0\ \textrm{on}\ |z|=w\
\textrm{and}\ I_w(f)=\oint_{|z|=w}d(\log f(z))=0.
\end{equation}
is satisfied if and only if for sufficiently small $R>0$ the operator
\begin{equation*}
L(\partial_t,\partial_z):=
P(\partial_t,\partial_z)\partial_t^{-j}\partial_z^{-\alpha}\colon G^s_w(R)\to G^s_w(R)
\end{equation*}
has Fredholm property on $G^s_w$.

Moreover this Fredholm operator $L(\partial_t,\partial_z)$ is bijective if and only if additionally $N$-th finite section Toeplitz matrix $T_{f}(N)$ with symbol $f(z)$ is invertible for any $N\in\NN_0$.

Hence, in particular, if one of the following conditions holds then $L(\partial_t,\partial_z)$ is a bijection:
\begin{enumerate}
 \item[(i)] $(j,\alpha)$ is an end point of the interval ${\rm conv\,}\{\mathring{N}_s\}$.
 \item[(ii)] there exists $c>0$ such that $0\not\in{\rm conv\,}\{f(z)\colon |z|=c\}$.
\end{enumerate} 
\end{Th}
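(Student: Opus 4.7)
The plan is to reduce the statement to the classical spectral theory of Toeplitz operators on a Hardy-type sequence space. First I would split
$L = L^{\rm pr} + L^{\rm rem}$, where $L^{\rm pr}:=P_s(\partial_t,\partial_z)\partial_t^{-j}\partial_z^{-\alpha}$ collects the monomials sitting on the line $L_s$, and $L^{\rm rem}:=(P-P_s)(\partial_t,\partial_z)\partial_t^{-j}\partial_z^{-\alpha}$ collects those whose exponents lie strictly inside $N(P)$ relative to $L_s$. The first technical task is to verify that for $R>0$ small enough $L^{\rm rem}$ is a compact operator on $G^s_w(R)$: in the weight $w^kR^{sk+\beta}/(sk+\beta)!$, every remainder monomial picks up a strictly positive extra power of $R$ compared with those in $L^{\rm pr}$, so its operator norm tends to zero as $R\to 0^+$, and $L^{\rm rem}$ is a norm limit of finite-rank operators.

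Next I would make the Toeplitz structure of the principal part explicit. The defining feature of $\mathring{N}_s$ is that the quantity $(s+1)j'+\alpha'$ is constant on $\mathring{N}_s$, so $P_s$ is quasi-homogeneous with respect to precisely the scaling built into $\|\cdot\|^{(s)}_{w,R}$. Reorganising the coefficients of $u(t,z)=\sum u_{k\beta}t^kz^\beta/k!\beta!$ by the Newton-polygon quantity $n:=sk+\beta$ and absorbing $w^k$ into the sequence, one finds that $L^{\rm pr}$ acts on the resulting sequence as convolution with the coefficients of
\[
f(z)\;=\;f_s(z)z^j\;=\sum_{(j',\alpha')\in\mathring{N}_s}a_{j'\alpha'}\,z^{j-j'},
\]
truncated to $n\ge 0$. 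This exhibits $L^{\rm pr}$, modulo an isomorphism of $G^s_w(R)$ with a weighted $\ell^1$-type space, as the Toeplitz operator $T_f$ with Laurent polynomial symbol $f$ on the circle $|z|=w$.

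With this identification, the Fredholm criterion is the classical Gohberg--Krein theorem: $T_f$ is Fredholm iff $f$ has no zeros on $|z|=w$, and $\mathrm{ind}\,T_f=-I_w(f)/(2\pi i)$; so index zero is exactly condition $(H_w)$. Since compact perturbations preserve Fredholmness and index, adding $L^{\rm rem}$ to $L^{\rm pr}$ gives the first equivalence. For bijectivity one invokes the classical finite-section characterisation for Toeplitz operators with Laurent symbols (Widom, B\"ottcher--Silbermann): $T_f$ is invertible iff every $T_f(N)$ is invertible. Case (i) then follows because an endpoint position of $(j,\alpha)$ in $\mathrm{conv}(\mathring{N}_s)$ forces $f$ to be a one-sided polynomial (only non-negative, or only non-positive, powers of $z$), so each $T_f(N)$ is triangular with the nonzero diagonal entry $a_{j\alpha}$. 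Case (ii) follows from a numerical-range argument: $0\notin \mathrm{conv}\{f(z):|z|=c\}$ forces the numerical range of every $T_f(N)$ (with the inner product matched to $|z|=c$) to avoid zero, hence invertibility.

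The main obstacle, in my view, is the compactness of $L^{\rm rem}$ together with the clean identification of $L^{\rm pr}$ with a genuine Toeplitz operator: the mixed weight $R^{sk+\beta}$ couples the two variables $t$ and $z$ in a non-separable way, so one must carefully verify that the extra powers of $R$ picked up by monomials outside $\mathring{N}_s$ are indeed uniformly positive, and that $L^{\rm pr}$ acts \emph{diagonally} across the Gevrey-scaling direction while mixing only in the Toeplitz direction. Once this reduction is in place, the entire statement reduces to well-developed Toeplitz spectral theory.
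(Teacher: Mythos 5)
You should first note that the paper does not actually prove this statement: it is quoted verbatim from Miyake--Yoshino \cite{M-Y}, whose method is indeed the Toeplitz-operator approach you sketch, so your Fredholm half is at least in the right spirit. The structural point you miss, however, is that the principal part is \emph{not} (conjugate to) the single infinite Toeplitz operator $T_f$ on a weighted sequence space. Because the coefficient array is indexed by $(k,\beta)\in\NN_0\times\NN_0$ with both indices bounded below, and because the monomials of $\mathring{N}_s$ preserve the quasi-homogeneous level (the invariant quantity is $(1+s)k+\beta$ --- note that your grading $n:=sk+\beta$ does not match the invariance $(s+1)j'+\alpha'=\mathrm{const}$ that you yourself identify, a bookkeeping slip inherited from the displayed norm), the principal part acts as a \emph{direct sum over the levels $n$ of finite sections} $T_f(N_n)$ of growing size, while the non-principal monomials strictly raise the level and furnish the small/compact, block-triangular remainder. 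This is precisely why the bijectivity criterion involves invertibility of $T_f(N)$ for \emph{every} $N\in\NN_0$: the kernel of $L$ is detected level by level in the finite blocks, and surjectivity then needs a small-$R$ Neumann-type argument for the triangular remainder; compactness of the remainder is of no help here, since compact perturbations preserve Fredholmness and index but not bijectivity.

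The step that actually fails is your ``classical finite-section characterisation'': the assertion that $T_f$ is invertible if and only if every $T_f(N)$ is invertible is not a theorem. The classical finite-section results (Gohberg--Feldman, B\"ottcher--Silbermann) only give invertibility of the sections for all \emph{sufficiently large} $N$ together with strong convergence of the inverses, and the paper contains a remark stressing exactly this distinction. Worse, under (\ref{eq:H_w}) the infinite Toeplitz operator with a nonvanishing Laurent-polynomial symbol of winding number zero is \emph{automatically} invertible (Wiener--Hopf factorisation $f=f_-f_+$), so your criterion would render bijectivity of $L$ automatic whenever (\ref{eq:H_w}) holds --- contradicting the very content of the theorem and the classical non-uniqueness examples (Leray) that motivate it. Once the diagonal blocks are correctly identified as genuine finite Toeplitz sections, your arguments for (i) (triangular sections with diagonal entry $a_{j\alpha}$) and (ii) (numerical range of $T_f(N)$ contained in ${\rm conv\,}\{f(z)\colon|z|=c\}$) are fine and essentially the same as the paper's corollary on the spectral condition; but the ``moreover'' equivalence, and with it the logical backbone of the statement, cannot be obtained from the spectral theory of the single operator $T_f$ plus compact perturbation as you propose.
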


\begin{Rem}
 The condition $I_w(f)=0$  for $f(z)=z^jf_s(z)$ in (\ref{eq:H_w}) means that the winding number of $f(z)$ at the origin with respect to the circle $K_{w}:=\{z\in\CC\colon |z|=w\}$ is equal to zero. By the argument principle this condition means that the function $f(z)$ has the same total number of zeros and poles (counted according to their multiplicity) inside the contour $K_w$. 
 \end{Rem}
 
 \begin{Rem}
  The condition (\ref{eq:H_w}) means that $N$-th finite section Toeplitz matrix $T_{f}(N)$ is invertible for sufficiently large $N\in\NN_0$,
  but this condition does not guarantee that it holds for any $N\in\NN_0$.
 \end{Rem}

\begin{Rem}
Observe that if the Fredholm operator $L(\partial_t,\partial_z)$ is bijective then the Goursat problem (\ref{eq:goursat}) is uniquely solvable in the Gevrey space $\Oo[[t]]_s$. So, the above theorem gives the sufficient conditions for the unique solvability of (\ref{eq:goursat}) in $\Oo[[t]]_s$.
\end{Rem}

Using Theorem \ref{th:1} it is easy to prove the spectral condition, which also guarantees 
the uniqueness and solvability of the Goursat problem (\ref{eq:goursat}).
\begin{Cor}[The spectral condition, see \cite{G,H,W}]
 If $(j,\alpha)\in\mathring{N}_s$ and the coefficients of the
 principal part $P_s(\partial_t,\partial_z)$ satisfy the spectral condition
 \begin{equation}
  \label{eq:spectral}
  |a_{j\alpha}|>\sum_{(l,\beta)\in\mathring{N}_s\setminus (j,\alpha)}|a_{l\beta}|w^{j-l}\quad\textrm{for some}\quad w>0,
 \end{equation}
 then the Goursat problem (\ref{eq:goursat}) is uniquely solvable in the Gevrey space $\Oo[[t]]_s$.
\end{Cor}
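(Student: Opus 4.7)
The plan is to derive the hypotheses of Theorem \ref{th:1} directly from the spectral condition (\ref{eq:spectral}). I would verify that condition~(ii) of Theorem \ref{th:1} is satisfied with $c = w$; as we will see, this single step supplies both $(H_w)$ (hence the Fredholm property) and the bijectivity of the associated operator $L(\partial_t,\partial_z) = P(\partial_t,\partial_z)\partial_t^{-j}\partial_z^{-\alpha}$, and the unique solvability of the Goursat problem in $\Oo[[t]]_s$ follows.

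The key estimate is a direct dominant-term calculation. With $f(z) = z^j f_s(z) = \sum_{(l,\beta)\in\mathring{N}_s} a_{l\beta}\, z^{j-l}$, isolating the term indexed by $(j,\alpha)$ gives, on the circle $|z| = w$,
\[
\left|\frac{f(z)}{a_{j\alpha}} - 1\right| \leq \frac{1}{|a_{j\alpha}|}\sum_{(l,\beta)\in\mathring{N}_s\setminus(j,\alpha)} |a_{l\beta}|\, w^{j-l} < 1,
\]
where the last strict inequality is precisely (\ref{eq:spectral}). Thus $f(z)/a_{j\alpha}$ lies in the open disc $\{\zeta\in\CC : |\zeta - 1| < 1\}$, which is contained in the open right half-plane $\{\zeta\in\CC : \RE \zeta > 0\}$. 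Multiplying back by $a_{j\alpha}$, the image $\{f(z) : |z| = w\}$ sits inside an open half-plane bounded by a line through the origin, and therefore its convex hull does not contain $0$. This verifies condition~(ii) of Theorem \ref{th:1} with $c = w$.

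The same geometric picture yields $(H_w)$ for free: $f_s(z)$ cannot vanish on $|z|=w$ (otherwise $f(z)=0$ there), and the image of $f$ staying inside an open half-plane forces the winding number $I_w(f) = \oint_{|z|=w} d(\log f(z))$ to be zero. Theorem \ref{th:1} then guarantees that $L(\partial_t,\partial_z)$ is a bijection on $G^s_w(R)$ for all sufficiently small $R > 0$. Given any $f(t,z)\in\Oo[[t]]_s$ and any Goursat datum $v(t,z)\in\Oo[[t]]_s$, by the remark following the definition of $G^s_w(R)$ we may choose $R$ small enough that $f - P(\partial_t,\partial_z)v\in G^s_w(R)$; its unique preimage $\tilde u$ under $L$ gives the unique solution $u = v + \partial_t^{-j}\partial_z^{-\alpha}\tilde u$ of (\ref{eq:goursat}). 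No step presents a real obstacle; the only thing to keep track of is the translation between bijectivity of $L$ and existence--uniqueness for the Goursat problem, which is routine.
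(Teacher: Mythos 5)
Your proposal is correct and follows essentially the same route as the paper: both verify condition (ii) of Theorem \ref{th:1} via the dominant-coefficient estimate for $f(z)=z^jf_s(z)$ on $|z|=w$, the only cosmetic difference being that the paper invokes Rouch\'e's theorem for $(H_w)$ and the estimate $\RE f>0$ for (ii), while you deduce both $(H_w)$ and (ii) from the single half-plane containment $|f(z)/a_{j\alpha}-1|<1$. Your closing paragraph on passing from bijectivity of $L$ to unique solvability of the Goursat problem is exactly the content the paper records in the remark preceding the corollary.
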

\begin{proof}
 If we expand $f(z):=z^jf_s(z)$ in the Laurent series $f(z)=\sum_{k=-m}^nf_kz^k$ then the spectral condition (\ref{eq:spectral}) means that
 $$|f_0|>\sum_{\genfrac{}{}{0pt}{}{k=-m}{k\neq0}}^n |f_k|w^k\quad\textrm{for some}\quad w>0.$$
 Using the Rouch\'e theorem we conclude that (\ref{eq:H_w})
 is satisfied for the same $w>0$.
 
 Moreover, we will show that $f(z)$ satisfies (ii).
 Indeed, multiplying $f(z)$ by $e^{i\theta}$ for some $\theta\in\RR$, if necessary, we may assume that $|f_0|=f_0$. Then
 $$
 \RE f(z)>f_0-\sum_{\genfrac{}{}{0pt}{}{k=-m}{k\neq0}}^n |f_k|w^k>0\quad \textrm{on the circle}\quad |z|=w.
 $$
 It means that $0\not\in\textrm{conv\,}\{f(z)\colon |z|=w\}$ and (ii) holds.
\end{proof}
\bigskip\par
\section{Fredholm property and the factorisation of the operator}
In this section we reformulate the result of Miyake and Yoshino \cite{M-Y} in terms of properties of the factorisation of
the operator $P(\partial_t,\partial_z)$.

Let $P(\lambda,\zeta)$ be a general polynomial of two variables, which is of order $m$ with respect to $\lambda$. We may write it as
 \begin{equation*}
     P(\lambda,\zeta)=P_0(\zeta)\lambda^m-\sum_{j=1}^m P_j(\zeta)\lambda^{m-j}=P_0(\zeta)\prod_{j=1}^n\prod_{k=1}^{m_j}(\lambda-\lambda_{jk}(\zeta)),
 \end{equation*}
 where $P_0(\zeta)\sim a_0\zeta^{m_0}$ for some $a_0\in\CC^*$ and $m_0\in\NN_0$, $m_1+\dots+m_n=m$ and $\lambda_{jk}(\zeta)$ are the roots of the characteristic equation
   $P(\lambda,\zeta)=0$ satisfying $\lambda_{jk}(\zeta)\sim \lambda_{jk}\zeta^{q_j}$ for some $\lambda_{jk}\in\CC^*$ and $q_j\in\QQ$.
   
Additionally we may assume that $q_1>q_2>...>q_n$ and 
$|\lambda_{j1}|\geq |\lambda_{j2}|\geq\dots\geq |\lambda_{jm_j}|$
for $j=1,\dots,n$.

Further, let 
$$
\tilde{n}:=
\left\{
   \begin{array}{lll}
 0&\textrm{for}& q_1<1\\
\max\{1\leq j\leq n\colon q_j\geq 1\}&\textrm{for}& q_1\geq 1.
\end{array}
\right.
$$

Then the Newton polygon of the operator $P$ has $\tilde{n}$ sides with
positive (or positive and vertical) slopes. These slopes are given by $1/s_1,\dots,
1/s_{\tilde{n}}$, where $s_l=q_l-1$ for $l=1,\dots,\tilde{n}$.

To shorten notation we denote
$\Lambda_l:=\prod_{j=1}^{l}\prod_{k=1}^{m_j}\lambda_{jk}$
for $l=1,...,\tilde{n}$ and $\Lambda_0:=1$.

We show that

\begin{Th}
\label{th:2}
   Suppose that $s\in[0,\infty)$, $(j,\alpha)\in{\sf conv\,}\{\mathring{N}_s\}$ and 
   $$L(\partial_t,\partial_z):=
    P(\partial_t,\partial_z)\partial_t^{-j}\partial_z^{-\alpha}
    \colon G^s_w(R)\to G^s_w(R).$$
    Then the following hold:
\begin{enumerate}
\item If $s=s_l$ for some $l\in\{1,\dots,\tilde{n}\}$ then
there exist $w>0$ such that for sufficiently small $R$ the operator $L(\partial_t,\partial_z)$ has Fredholm property on $G^s_w$ if and only if $|\lambda_{l,j_l}|>|\lambda_{l,j_{l}+1}|$ and $w\in(|\lambda_{l,j_{l}}|^{-1},|\lambda_{l,j_l+1}|^{-1})$.
Moreover $L(\partial_t,\partial_z)$ is bijective if and only if additionally $N$-th finite section Toeplitz matrix $T_{f}(N)$ with symbol $f(z):=f_s(z)z^j$ is invertible for any $N\in\NN_0$.
\item If $s\not\in \{s_1,\dots,s_{\tilde{n}}\}$ then for any $w>0$ there exists $R_0>0$ such that for any $R\in(0,R_0]$ the operator $L(\partial_t,\partial_z)$ is a bijection on $G^s_w(R)$.
\end{enumerate}
\end{Th}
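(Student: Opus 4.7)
The strategy is to apply Theorem \ref{th:1} and to translate its two hypotheses in (\ref{eq:H_w}) --- the non-vanishing of $f_s$ on the circle $|z|=w$ and the vanishing of the winding number $I_w(f)$ --- into the product structure of $P(\partial_t,\partial_z)$. The first step is to establish, for each $l\in\{1,\dots,\tilde{n}\}$, the factorisation of the principal part
\begin{equation*}
P_{s_l}(\partial_t,\partial_z)=C\,\partial_t^{a}\partial_z^{b}\prod_{k=1}^{m_l}\bigl(\partial_t-\lambda_{lk}\partial_z^{q_l}\bigr),
\end{equation*}
where $a=\sum_{j>l}m_j$, $b=m_0+\sum_{j<l}q_jm_j$ and the nonzero constant $C$ depends on $a_0$ and $\Lambda_{l-1}$. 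This should follow by tracking which vertices of each factor's Newton polygon lie on the side of slope $1/s_l$ of $N(P)$: factors with $q_j>q_l$ contribute only their $\lambda_{jk}\partial_z^{q_j}$ vertex, factors with $q_j<q_l$ contribute only their $\partial_t$ vertex, and lower-order corrections of $\lambda_{jk}(\zeta)$ lie strictly inside $N(P)$. Substituting $\partial_t\mapsto z^{-1}$, $\partial_z\mapsto 1$ then yields the explicit Toeplitz symbol $f_{s_l}(z)=C\,z^{-a-m_l}\prod_{k=1}^{m_l}(1-\lambda_{lk}z)$.

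For part~(1) the statement reduces to a winding-number computation for $f(z)=z^j f_{s_l}(z)=C\,z^{j-a-m_l}\prod_{k=1}^{m_l}(1-\lambda_{lk}z)$ on the circle $|z|=w$. The non-vanishing clause forces $|\lambda_{lk}|\neq 1/w$ for every $k$, and the argument principle gives
\begin{equation*}
I_w(f)=(j-a-m_l)+\#\{k\colon |\lambda_{lk}|>1/w\}.
\end{equation*}
The ordering $|\lambda_{l1}|\geq\dots\geq|\lambda_{lm_l}|$ lets one write this cardinality as an index $j_l\in\{0,\dots,m_l\}$ characterised by $|\lambda_{l,j_l}|>1/w>|\lambda_{l,j_l+1}|$ (with the conventions $|\lambda_{l,0}|=\infty$, $|\lambda_{l,m_l+1}|=0$), whose very existence requires the strict gap $|\lambda_{l,j_l}|>|\lambda_{l,j_l+1}|$. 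The equation $I_w(f)=0$ then pins down $j_l=a+m_l-j$, giving the equivalence claimed in (1); bijectivity follows from the extra Toeplitz-invertibility hypothesis already built into Theorem \ref{th:1}.

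Part~(2) will be immediate: when $s\notin\{s_1,\dots,s_{\tilde{n}}\}$ the line $L_s$ meets $N(P)$ at a single vertex, so $\mathring{N}_s$ is a singleton, $f(z)=z^jf_s(z)=a_{j\alpha}$ is a nonzero constant, (\ref{eq:H_w}) holds automatically for every $w>0$, and every Toeplitz matrix $T_f(N)=a_{j\alpha}I_{N+1}$ is invertible; Theorem \ref{th:1} then yields bijectivity. The main obstacle I anticipate lies in the first paragraph: arguing carefully that the lower-order corrections in $\lambda_{jk}(\zeta)$ and in $P_0(\zeta)$ contribute only to the interior of $N(P)$ relative to the side of slope $1/s_l$, and bookkeeping the exponents so that the correspondence $j\leftrightarrow j_l=a+m_l-j$ becomes a bijection between integer points $(j,\alpha)\in\mathrm{conv}\{\mathring{N}_{s_l}\}$ and indices $j_l\in\{0,\dots,m_l\}$.
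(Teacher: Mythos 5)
Your proposal is correct and takes essentially the same route as the paper: factor the principal part $P_{s_l}(\partial_t,\partial_z)=a_0\Lambda_{l-1}\partial_t^{\,m-m_1-\dots-m_l}\partial_z^{\,m_0+q_1m_1+\dots+q_{l-1}m_{l-1}}\prod_{k=1}^{m_l}(\partial_t-\lambda_{lk}\partial_z^{q_l})$, read off the symbol $f(z)=z^jf_s(z)$, count its zeros $\lambda_{lk}^{-1}$ and the pole at the origin to translate (\ref{eq:H_w}) into the gap condition $|\lambda_{l,j_l}|>|\lambda_{l,j_l+1}|$, and reduce part (2) to a nonzero constant symbol via Theorem \ref{th:1}. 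The only point to tidy is the $s=0$ subcase of (2): there $L_s$ is vertical and meets $N(P)$ along the vertical side rather than at a single vertex, but under the standing assumption $q_{\tilde{n}}>1$ or $\tilde{n}=0$ the set $\mathring{N}_0$ is still a singleton (the paper handles this as a separate third case), so your conclusion is unaffected.
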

\begin{proof}
 Observe that
$$
P_{s_l}(\partial_t,\partial_z)= a_0\Lambda_{l-1}\partial_t^{m-m_1-\dots-m_l}\partial_z^{m_0+q_1m_1+\dots+q_{l-1}m_{l-1}}\prod_{k=1}^{m_l}(\partial_t-\lambda_{lk}\partial_z^{q_l})$$
for $l=1,\dots,\tilde{n}$.

Moreover, if $s\in(s_{l+1},s_{l})$ for $l=0,\dots,\tilde{n}$ with
$s_0:=\infty$ and $s_{\tilde{n}+1}:=0$,
then 
$$P_{s}(\partial_t,\partial_z)= a_0\Lambda_l\partial_t^{m-m_1-\dots-m_l}\partial_z^{m_0+q_1m_1+\dots+q_{l}m_{l}}.$$
Additionally, if $\tilde{n}=0$ or $q_{\tilde{n}}>1$ then we have
$$P_0(\partial_t,\partial_z)=a_0\Lambda_{\tilde{n}}\partial_t^{m-m_1-\dots-m_{\tilde{n}}}\partial_z^{m_0+q_1m_1+\dots+q_{\tilde{n}}m_{\tilde{n}}}.$$
\bigskip
\par
Fix $s\geq 0$ and $(j,\alpha)\in{\sf conv\,}\{\mathring{N}_s\}$.
We have 3 possibilities:
\begin{enumerate}
\item If $s=s_l$ for some $l\in\{1,\dots,\tilde{n}\}$  then  $j=m-m_1-\cdots-m_{l-1}-j_l$ for some $j_l\in\{0,\dots,m_l\}$ and
$$
f_s(z)z^j=a_0\Lambda_{l-1}z^{-j_l}\prod_{k=1}^{m_l}(1-\lambda_{lk}z).$$
This function has zeros at points $\lambda_{l1}^{-1},\dots,\lambda_{lm_l}^{-1}$, so 
the condition (\ref{eq:H_w}) is satisfied for some $w>0$ if and only if $|\lambda_{l,j_l}|>|\lambda_{l,j_{l}+1}|$ and $w\in(|\lambda_{l,j_{l}}|^{-1},|\lambda_{l,j_{l}+1}|^{-1})$ with $\lambda_{l,0}:=\infty$ and $\lambda_{l,m_l+1}:=0$.
\item If  $s\in(s_{l+1},s_{l})$ for some $l\in\{0,1,\dots,\tilde{n}\}$
then $j=m-m_1-\cdots-m_l$ and
$f_s(z)z^j=a_0\Lambda_{l}$.
Since this function is constant and different than zero, we conclude that the condition (\ref{eq:H_w}) is satisfied for any $w>0$, and moreover
by (ii) the problem (\ref{eq:goursat}) is uniquely solved.
\item If $s=0$ and additionally $q_{\tilde{n}}>1$ or $\tilde{n}=0$ then
$j=m-m_1-\cdots-m_{\tilde{n}}$ and
$f_s(z)z^j=a_0\Lambda_{\tilde{n}}$,
so we have the same conclusion as in the previous case.
\end{enumerate}
\end{proof}
\bigskip\par
\section{Crucial lemmas}
In this section we collect the crucial lemmas about the analytic continuation of the solutions of some equations. 

We fix $a,b\in\QQ$ such that $a>b>0$ and $a\geq 1$. We also assume that the symbol of the operator $P(\partial_{\Gamma_a,t},\partial_z)$ has the factorisation
  $P(\lambda,\zeta)=P_{(1)}(\lambda,\zeta) P_{(2)}(\lambda,\zeta)$,
\begin{gather*}
P_{(i)}(\lambda,\zeta)=\prod_{k=1}^{m_i}(\lambda-\lambda_{ik}(\zeta))^{\alpha_{ik}},
\quad i=1,2,
\end{gather*}
with algebraic functions $\lambda_{ik}(\zeta)$ which are also analytic functions of the variable $\xi=\zeta^{1/\kappa}$ for $|\zeta|\geq r_0$ (for some $\kappa\in\NN$ and $r_0>0$), such that
$\lambda_{1k}(\zeta)\sim \lambda_{1k}\zeta^a$
and
$\lambda_{2k}(\zeta)\sim \lambda_{2k}\zeta^{q_k}$, $q_k\leq b$. Let $M_i:=\sum_{k=1}^{m_i}\alpha_{ik}$, $i=1,2$ and $M=M_1+M_2$.
\begin{Rem}
\label{re:normal_form}
 Observe that $P$ is an operator of order $M$ given in the normal form with respect to $\partial_{\Gamma_a,t}$, i.e. we may write it as
 \begin{equation}
 \label{eq:normal_form}
P(\partial_{\Gamma_a,t},\partial_z)=\partial_{\Gamma_a,t}^M-\sum_{l=1}^M P_l(\partial_z)\partial_{\Gamma_a,t}^{M-l}.
\end{equation}
\end{Rem}
\begin{Rem}
\label{re:H_w}
 Assume additionally that $|\lambda_{11}|\geq|\lambda_{12}|\geq...\geq|\lambda_{1m_1}|$. If we take $(j,\alpha)\in{\rm conv\,}\{\mathring{N}_s\}$ with $s=a-1$ then (see Theorem \ref{th:2})
 $$
 f_s(z)=P_s(z^{-1},1)=z^{-M}\prod_{k=1}^{m_1}(1-\lambda_{1k}z)^{\alpha_{1k}}.
 $$
 Observe that the condition (\ref{eq:H_w}) for $f(z)=f_s(z)z^j$ (equivalently, the Fredholm property for the operator $L(\partial_t,\partial_z)=P(\partial_t,\partial_z)\partial_t^{-j}\partial_z^{-\alpha}$ on $G^s_w$)
 holds if and only if there exists $\tilde{m}_1\in\{1,...,m_1\}$ such that
 $\sum_{k=1}^{\tilde{m}_1}\alpha_{1k}=M-j$, $|\lambda_{1,\tilde{m}_1}|>|\lambda_{1,\tilde{m}_1+1}|$ and $w\in(|\lambda_{1,\tilde{m}_1}|^{-1},|\lambda_{1,\tilde{m}_1+1}|^{-1})$.
\end{Rem}

\begin{Lem}
 \label{le:inhomogeneous}
  The Cauchy problem
 \begin{equation}
 \label{eq:inhomo_Q}
  \left\{
   \begin{array}{l}
     P(\partial_{\Gamma_a,t},\partial_z)w(t,z)=g(t,z)\in\Oo(D^2)\\
     \partial_{\Gamma_a,t}^lw(0,z)=0,\quad l=0,\dots,M-1.
   \end{array}
  \right.
  \end{equation}
  has the unique solution $w(t,z)\in\Oo(D^2)$
 
 Moreover, if we fix $d,\eta\in\RR$, $K\geq\frac{1}{a-b}$. and $p\in\NN$, and if we assume that $\arg\lambda_{1k}\in\{\eta+2n\pi/p\colon n\in\ZZ\}$ for $k=1,...,m_1$ and $g(t,z)\in\Oo^{K,aK}(\hat{S}_{d}\times\hat{S}_{(d+\eta+2n\pi/p)/a})$ for $n\in\ZZ$
then the solution $w(t,z)$ belongs to the same space
as $g(t,z)$.
\end{Lem}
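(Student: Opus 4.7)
The plan is to exploit the factorisation $P = P_{(1)}P_{(2)}$ in order to reduce (\ref{eq:inhomo_Q}) to a cascade of simple pseudodifferential Cauchy problems
\begin{equation*}
(\partial_{\Gamma_a,t}-\lambda_{ik}(\partial_z))\tilde w = h,\qquad \tilde w(0,z)=0,
\end{equation*}
iterated $\alpha_{ik}$ times for each characteristic root. Concretely, setting $v:=P_{(2)}w$, the problem splits into $P_{(1)}v=g$ with $\partial_{\Gamma_a,t}^l v(0,z)=0$ for $l=0,\dots,M_1-1$, followed by $P_{(2)}w=v$ with $\partial_{\Gamma_a,t}^l w(0,z)=0$ for $l=0,\dots,M_2-1$; since each simple factor is of first order in $\partial_{\Gamma_a,t}$, the zero initial data propagates unchanged through the factorisation.

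For existence and uniqueness in $\Oo(D^2)$, I would argue as follows. Because $P$ is in the normal form (\ref{eq:normal_form}), the coefficients of the formal solution are determined by a standard recurrence. For a single simple factor the solution admits a Duhamel-type representation involving the Mittag-Leffler kernel $\mathbf{E}_{1/p\kappa}$ from (\ref{eq:integral_representation}) composed with $\lambda_{ik}(\partial_z)$ through (\ref{eq:lambda}); by Proposition \ref{pr:2} the resulting integral is independent of the cut-off $r_0$, and standard bounds on the kernel show that $\tilde w\in\Oo(D^2)$ whenever $h\in\Oo(D^2)$. Iterating $M$ times yields the unique $w\in\Oo(D^2)$.

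For the exponential-growth statement, I would apply the same Duhamel formula, this time letting the outer $s$-integral run to infinity along a ray in $\hat S_d$ and deforming the inner $\zeta$-contour in the pseudodifferential representation (\ref{eq:lambda}). For the dominant factors in $P_{(1)}$, the arithmetic hypothesis $\arg\lambda_{1k}\in\{\eta+2n\pi/p\colon n\in\ZZ\}$ permits one to pick a ray for $\zeta$ along which $\RE\bigl(\lambda_{1k}(\zeta)(t-s)\bigr)\le 0$ for all $t\in\hat S_d'\prec\hat S_d$, and the substitution supplied by Lemma \ref{le:inverse} (rewriting $\lambda_{1k}^{1/\kappa}\zeta$ as a new integration variable) transfers the $z$-dependence into the sectors $\hat S_{(d+\eta+2n\pi/p)/a}$ with exponent $aK$. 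For the subdominant factors in $P_{(2)}$, where $q_k\le b<a$, the algebraic growth $\lambda_{2k}(\zeta)\sim\lambda_{2k}\zeta^{q_k}$ is absorbed by the exponential weight $e^{-(s\zeta)^p}$ exactly under the balance $K\ge 1/(a-b)$, which is what forces the final estimate $|\tilde w(t,z)|\le A\,e^{B_1|t|^K}e^{B_2|z|^{aK}}$.

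The hard part will be the simultaneous control of the exponential \emph{type} (not only its order) as the contour is deformed through several simple factors: one must keep the sector $\hat S_d$ in $t$ compatible with the whole family of sectors $\hat S_{(d+\eta+2n\pi/p)/a}$ in $z$ (one for each admissible $n\in\ZZ$), and ensure that rotating the $\zeta$-contour for one factor does not spoil the estimate produced by another. The arithmetic condition on $\arg\lambda_{1k}$ is precisely what makes this possible, while Lemma \ref{le:inverse} supplies the substitution that aligns the rays correctly. Combining the Mittag-Leffler asymptotics of $\mathbf{E}_{1/p\kappa}$ along each admissible direction with a uniform saddle-point estimate on the resulting Laplace-type integral is the technical heart of the argument.
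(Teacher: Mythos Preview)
Your cascade decomposition (solve $P_{(1)}v=g$, then $P_{(2)}w=v$, factor by factor) is a genuinely different route from the paper's. The paper instead applies a \emph{parallel} partial-fraction splitting: by \cite[Theorem~1]{Mic9} one writes $w=\sum_{j,k,l}w_{jkl}$ where each $w_{jkl}$ \emph{independently} solves
\[
(\partial_{\Gamma_a,t}-\lambda_{jk}(\partial_z))^l w_{jkl}=g_{jkl}:=d_{jkl}(\partial_z)g,\qquad \partial_{\Gamma_a,t}^i w_{jkl}(0,z)=0,
\]
with $d_{jkl}(\zeta)$ an algebraic symbol. Because the pieces are decoupled, one can treat the dominant ($j=1$) and subdominant ($j=2$) blocks separately and then simply add. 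In your cascade you must instead feed the output of one simple factor as the inhomogeneity of the next, and check at each step that the intermediate function still lies in $\Oo^{K,aK}(\hat S_d\times\hat S_{(d+\eta+2n\pi/p)/a})$ for \emph{all} $n\in\ZZ$; you do not address this propagation.

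There are also two concrete inaccuracies. First, Lemma~\ref{le:inverse} plays no role here: the paper invokes \cite[Theorem~2]{Mic9} for the $j=1$ pieces (which already gives joint $(t,z)$-growth in the sectors $\hat S_{(d+\arg\lambda_{1k}+2m\pi)/a}$), and then a \emph{separate} $z$-continuation argument, deforming the $w$-contour as in \cite[Lemma~4]{Mic8}, to reach the full family $\hat S_{(d+\eta+2n\pi/p)/a}$. Second, your heuristic ``choose a ray with $\RE(\lambda_{1k}(\zeta)(t-s))\le 0$'' misreads the kernel: the Duhamel representation for $\partial_{\Gamma_a,t}$ involves the function $e_{a,l}\bigl((t^{1/a}-\tau)^a\lambda_{jk}(\zeta)\bigr)$ built from the Mittag--Leffler function $\mathbf{E}_a$, not an exponential, so the relevant estimate is $|e_{a,l}(x)|\le C e^{c|x|^{1/a}}$ along suitable rays, not a sign condition on a real part. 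For the subdominant pieces ($j=2$) the paper estimates this kernel directly to obtain growth in $t$ of order $\frac{1}{a-q_k}\le\frac{1}{a-b}\le K$, which is the precise place where $K\ge\frac{1}{a-b}$ enters; your phrase ``absorbed by the exponential weight $e^{-(s\zeta)^p}$'' does not capture this mechanism.
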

\begin{proof}  
Since the operator $P(\partial_{\Gamma_a,t},\partial_z)$ satisfies (\ref{eq:normal_form}), there exists the unique formal
power series solution $w(t,z)\in\CC[[t,z]]$ of (\ref{eq:inhomo_Q}).

If $g(t,z)\in\Oo(D^2)$ then by \cite[Theorem 1]{Mic9}
also $w(t,z)\in\Oo(D^2)$ and
  \begin{equation}
  \label{eq:sum_w}
  w(t,z)=\sum_{j=1}^2\sum_{k=1}^{m_j}\sum_{l=1}^{\alpha_{jk}}w_{jkl}(t,z)
  \end{equation}
with $w_{jkl}\in\Oo_{1,1/\kappa}(D^2)$ satisfying
\begin{equation}
\label{eq:w_jkl}
\left\{
\begin{array}{l}
(\partial_{\Gamma_{a},t}-\lambda_{jk}(\partial_z))^lw_{jkl}
=g_{jkl}(t,z)\\
     \partial_{\Gamma_{a},t}^i w_{jkl}(0,z)=0,\ i=0,\dots,l-1,
   \end{array}
  \right.
  \end{equation}
where $g_{jkl}(t,z):=d_{jkl}(\partial_z)g(t,z)$ and $d_{jkl}(\zeta)$ is a holomorphic function of the variable $\xi=\zeta^{1/\kappa}$ and of moderate growth.

If additionally we assume that $g(t,z)\in\Oo^{K,aK}(\hat{S}_d\times\hat{S}_{(d+\eta+2n\pi/p)/a})$ for $n\in\ZZ$ then also $g_{jkl}(t,z)\in\Oo_{1,1/\kappa}^{K,aK}(\hat{S}_d\times\hat{S}_{(d+\eta+2n\pi/p)/a})$ for $n\in\ZZ$.

Moreover, by \cite[Proposition 9]{Mic9} the solution $w_{jkl}(t,z)$ of (\ref{eq:w_jkl}) has the
   integral representation
   \begin{equation}
     \label{eq:integral_repr}
     w_{jkl}(t,z)=\frac{-1}{2\kappa\pi i} \int_0^{t^{\frac{1}{a}}}\oint_{|w|=\varepsilon}^{\kappa} g_{jkl}(\tau^{a},w)\partial_{\tau}k_{jkl}(t,\tau,z,w)
  \,dw\,d\tau,
    \end{equation}
    where 
    \begin{equation}
    \label{eq:def_k}
    k_{jkl}(t,\tau,z,w):=\int_{r_0e^{i\theta}}^{\infty(\theta)}\lambda_{jk}^{-l}(\zeta)e_{a,l}\big((t^{\frac{1}{a}}-\tau)^{a}\lambda_{jk}(\zeta)\big)
    \mathbf{E}_{1/\kappa}(\zeta^{\frac{1}{\kappa}} z^{\frac{1}{\kappa}})e^{-\zeta w}\,d\zeta
    \end{equation}
    and  $e_{a,l}(x)$ is given by (see \cite[Lemma 3]{Mic9})
    $$
    e_{a,l}(x)=\sum_{n=l}^{\infty}\binom{n-1}{l-1}\frac{x^{n}}{\Gamma(1+an)}=\frac{1}{(l-1)!}x^{l}\Big(\frac{\mathbf{E}_a(x)-1}{x}\Big)^{(l-1)}.
    $$
    
Since the function $z\mapsto g_{jkl}(t,z)$ belongs to the space $\Oo_{1/\kappa}^{aK}(\hat{S}_{(d+\eta+2n\pi/p)/a})$ for $n\in\ZZ$, deforming the path of integration with respect to
$w$ in the integral representation (\ref{eq:integral_repr}) as in the proof of \cite[Lemma 4]{Mic8} we conclude that also
\begin{equation}
\label{eq:first_observation}
z\mapsto w_{jkl}(t,z)\in\Oo_{1/\kappa}^{aK}(\hat{S}_{(d+\eta+2n\pi/p)/a})\quad\textrm{for}\quad n\in\ZZ.
\end{equation}
Since in particular $g_{1kl}(t,z)\in\Oo_{1,1/\kappa}^{K,aK}(\hat{S}_d\times\hat{S}_{(d+\arg\lambda_{1k}+2n\pi)/a})$ for $n\in\ZZ$, by \cite[Theorem 2]{Mic9} also
$w_{1kl}(t,z)\in\Oo_{1,1/\kappa}^{K,aK}(\hat{S}_d\times\hat{S}_{(d+\arg\lambda_{1k}+2n\pi)/a})$ for $n\in\ZZ$.
Finally, by (\ref{eq:first_observation})
we conclude that 
$w_{1kl}(t,z)\in\Oo_{1,1/\kappa}^{K,aK}(\hat{S}_d\times\hat{S}_{(d+\eta+2n\pi/p)/a})$ for $n\in\ZZ$.

To prove the similar result for $w_{2kl}(t,z)$ observe that we may estimate the integrand of $k_{2kl}$ in (\ref{eq:def_k}) by
(see also the proof of \cite[Proposition 9]{Mic9})
$$
\big|\lambda_{2k}^{-l}(\zeta)e_{a,l}\big((t^{\frac{1}{a}}-\tau)^{a}\lambda_{2k}(\zeta)\big)
    \mathbf{E}_{1/\kappa}(\zeta^{\frac{1}{\kappa}} z^{\frac{1}{\kappa}})e^{-\zeta w}\big|\leq
Ae^{b_1|t^{1/a}-\tau||\zeta|^{q_k/a}}e^{b_2|\zeta||z|}e^{-b_3|\zeta||w|}.
$$
So, if $|z|$ is small relative to $|w|$ then for any $t\in\CC$ and $\tau\in[0,t^{1/a}]$ there exist $\tilde{A},\tilde{b}_1,\tilde{b}_2>0$ such that 
$$
|k_{2kl}(t,\tau,z,w)|
 \leq
    \int_{r_0}^{\infty}\tilde{A}e^{\tilde{b}_1x^{q_k/a}t^{1/a}-\tilde{b}_2x|w|}\,dx<
    \infty.
 $$
 Since the integrand in the above inequality is maximal for
 $x\sim |t|^{\frac{1}{a-q_k}}$ we conclude that the function
 $t\mapsto k_{2kl}(t,\tau,z,w)$ belongs to the space $\Oo_{1/\kappa}^{\frac{1}{a-q_k}}(\CC)$. On the other hand, since $K\geq\frac{1}{a-b}\geq\frac{1}{a-q_k}$ for $k=1,...,m_2$ and the function $t\mapsto g_{2kl}(t,z)$ belongs to the space $\Oo^{K}(\hat{S}_d)$, by (\ref{eq:integral_repr}) we get that also 
 $t\mapsto w_{2kl}(t,z)\in\Oo^{K}(\hat{S}_d)$.
Hence by (\ref{eq:first_observation}) we conclude that also
$w_{2kl}(t,z)\in\Oo_{1,1/\kappa}^{K,aK}(\hat{S}_d\times\hat{S}_{(d+\eta+2n\pi/p)/a})$ for $a\in\ZZ$.

Finally, since $w(t,z)\in\Oo(D^2)$, by (\ref{eq:sum_w}) we conclude that $w(t,z)
\in\Oo^{K,qK}((\hat{S}_d\times\hat{S}_{(d+\eta+2n\pi/p)/a})$ for $n\in\ZZ$.
\end{proof}

To prove the crucial result about the analytic continuation of the solutions of the Goursat problem we need the following special and refine version of \cite[Theorem 2]{Mic8} (see also \cite[Lemma 4]{Mic8})
\begin{Lem}
\label{le:crucial}
Let $\kappa\in\NN$, $d\in\RR$, $K>1$, $\beta\in\NN$, $\lambda\in\CC^*$,  and let
$\lambda(\zeta)\sim\lambda\zeta$ be an algebraic function, which
is also an analytic and invertible function of the variable $\zeta$ for $|\zeta|\geq r_0$ (for some $r_0>0$) with a simple pole at infinity.
Suppose that $u(t,z)\in\Oo(D^2)$ satisfies the equation
\begin{equation}
\label{eq:invertible}
  (\partial_{\Gamma_{1/\kappa},t}-\lambda(\partial_{\Gamma_{1/\kappa},z}))^{\beta}u(t,z)=0.
\end{equation}
Then for any $R<\infty$ we conclude that
$$\varphi_j(z):=\partial^j_{\Gamma_{1/\kappa},t}u(0,z)\in\Oo(\hat{S}_{d+\arg\lambda}\cap D_R)
$$
for $j=0,\dots,\beta-1$ if and only if
$$
\psi_n(t):=\partial^n_{\Gamma_{1/\kappa},z}u(t,0)\in\Oo(\hat{S}_d\cap D_{R/|\lambda|})\quad\textrm{for}\quad n=0,\dots,\beta-1.
$$
Moreover, for every $\varepsilon>0$ there exist constants $C_1,C_2<\infty$, which are independent of the Goursat data, such that the following conditions hold:
\begin{enumerate}
 \item[(a)] if $\varphi_j(z)\in\Oo^K(\hat{S}_{d+\arg\lambda})$ and
 there exist $A_1,B_1<\infty$ such that
 \begin{gather*}
 |\varphi_j(z)|\leq A_1e^{B_1|z|^{K}}\quad \text{for}
 \quad j=0,\dots,\beta-1
 \end{gather*}
 then $\psi_n(t)\in\Oo^K(\hat{S}_d)$ and
 \begin{gather*}
  |\psi_n(t)|\leq C_1A_1e^{B_1(1+\varepsilon)^K|\lambda|^K|t|^K}\quad\text{for}\quad n=0,\dots,\beta-1.
 \end{gather*}
\item[(b)] if there exist $A_2,B_2<\infty$ such that
$\psi_n(t)\in\Oo^K(\hat{S}_d)$ and
\begin{gather*}
 |\psi_n(t)|\leq A_2e^{B_2|\lambda|^K|t|^K} \quad \text{for} \quad n=0,\dots,\beta-1
 \end{gather*} 
then $\varphi_j(z)\in\Oo^K(\hat{S}_{d+\arg\lambda})$
 \begin{gather*}
 |\varphi_j(z)|\leq C_2A_2e^{B_2(1+\varepsilon)^K|z|^{K}}\quad \text{for}
 \quad j=0,\dots,\beta-1.
 \end{gather*}
\end{enumerate}
\end{Lem}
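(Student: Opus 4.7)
My approach would combine two explicit integral representations of the solution $u(t,z)$. The first, built from the kernel $\mathbf{E}_{1/\kappa}$ as in formula (\ref{eq:integral_representation}) and its generalisation (\ref{eq:lambda}), expresses $u(t,z)$ as an iterated contour integral of the data $\varphi_0,\dots,\varphi_{\beta-1}$ against a Mittag--Leffler type kernel involving $\lambda(\zeta)$. A dual representation --- available precisely because $\lambda(\zeta)$ is invertible near infinity with $\lambda^{-1}(\zeta)\sim \lambda^{-1}\zeta$ --- expresses $u(t,z)$ through $\psi_0,\dots,\psi_{\beta-1}$: indeed the operator $(\partial_{\Gamma_{1/\kappa},t}-\lambda(\partial_{\Gamma_{1/\kappa},z}))^{\beta}$ has order $\beta$ both in $t$ and in $z$, so invertibility of $\lambda$ lets one solve the equation symmetrically in the $z$-direction with propagation symbol $\lambda^{-1}(\partial_{\Gamma_{1/\kappa},t})$. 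Each half of the bi-implication and each of the estimates (a), (b) will then follow by a contour deformation in the corresponding representation.

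\textbf{Forward direction (a).} Writing $\psi_n(t) = \partial^{n}_{\Gamma_{1/\kappa},z}u(t,0)$ via the first representation produces a formula of the schematic form
\begin{equation*}
\psi_n(t) \;=\; \sum_{j=0}^{\beta-1}\frac{1}{2\kappa\pi i}\oint^{\kappa}_{|w|=\varepsilon}\varphi_j(w)\int_{r_0e^{i\theta}}^{\infty(\theta)}\zeta^{n}\,K_{j}\!\bigl(t,\lambda(\zeta)\bigr)\,p(w\zeta)^{p-1}e^{-(w\zeta)^{p}}\,d\zeta\,dw,
\end{equation*}
where $K_j$ is a kernel of $e_{a,l}$-shape (cf.\ the formula for $e_{a,l}$ in the proof of Lemma \ref{le:inhomogeneous}) encoding both the $t$-power $t^j$ and the $\beta$-fold factor. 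I would deform the $w$-contour into the sector $\hat{S}_{d+\arg\lambda}$ of analyticity of $\varphi_j$, following the template of the proof of \cite[Lemma 4]{Mic8}; this at once yields the analytic continuation of $\psi_n$ to $\hat{S}_d$. For the quantitative bound I would insert $|\varphi_j(w)|\le A_1 e^{B_1|w|^{K}}$, estimate the $\zeta$-integral by a Laplace / saddle-point argument exploiting the competition between $e^{-(w\zeta)^{p}}$ and the exponential growth of $K_j$, and finally use the sharp inequality $|\lambda(\zeta)|\le (1+\varepsilon)|\lambda||\zeta|$ valid for $|\zeta|$ sufficiently large. This absorbs the lower-order correction $\lambda(\zeta)-\lambda\zeta$ into the slack $(1+\varepsilon)^K$ and yields $|\psi_n(t)|\le C_1 A_1 e^{B_1(1+\varepsilon)^K|\lambda|^K|t|^K}$.

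\textbf{Reverse direction (b) and main obstacle.} Since $\lambda^{-1}(\zeta)\sim\lambda^{-1}\zeta$ is again an invertible algebraic function with a simple pole at infinity, the entire forward argument can be rerun with $t$ and $z$ interchanged and $\lambda$ replaced by $\lambda^{-1}$, applied to the dual representation; the factor $|\lambda|^{-K}$ from the replacement combines with the natural $|\lambda z|^K$ that appears in the estimate to give exactly $|z|^K$, so one obtains $|\varphi_j(z)|\le C_2 A_2 e^{B_2(1+\varepsilon)^K |z|^K}$. The qualitative part --- pure analytic continuation --- parallels \cite[Theorem 2]{Mic8} and uses only standard contour deformation. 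The genuinely delicate point, which I expect to be the main obstacle, is tracking the \emph{type} of exponential growth rather than merely its order: the constants $|\lambda|^{\pm K}$ and $(1+\varepsilon)^K$ must appear sharply, independently of the data. This forces the saddle-point analysis of the inner $\zeta$-integral to separate cleanly the leading behaviour $\lambda\zeta$ from the subleading correction $\lambda(\zeta)-\lambda\zeta$, and to verify that the absolute constants $C_1,C_2$ depend only on $\varepsilon,\kappa,K,\beta$ and on $\lambda(\zeta)$ itself.
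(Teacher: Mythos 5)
Your proposal follows essentially the same route as the paper: the forward direction via the explicit Mittag--Leffler-type integral representation of the solution in terms of the $t$-data, contour deformation of the $w$-integral as in \cite[Lemma 4]{Mic8}, and a careful estimate isolating the leading behaviour $\lambda\zeta$ to capture the type $(1+\varepsilon)^K|\lambda|^K$; and the reverse direction by exploiting invertibility of $\lambda(\zeta)$ to rewrite the equation with the roles of $t$ and $z$ swapped and symbol $\lambda^{-1}(\partial_{\Gamma_{1/\kappa},t})$ (the paper cites \cite[Lemmas 6 and 7]{Mic7} for exactly this), then repeating the forward argument, with the factor $|\lambda|^{-K}$ cancelling as you indicate. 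The only cosmetic difference is that the paper first reduces by superposition to a single nonzero datum $\varphi_{\beta-1}=\lambda^{\beta-1}(\partial_{\Gamma_{1/\kappa},z})\varphi$ instead of carrying the sum over all $j$.
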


\begin{proof}
 ($\Longrightarrow$)
 By the principle of superposition of solutions of linear equations we may assume that
 $\varphi_j(z)=\partial^j_{\Gamma_{1/\kappa},t}u(0,z)=0$ for $j=0,\dots,\beta-2$ and
 $\varphi_{\beta-1}(z)=
 \partial_{\Gamma_{1/\kappa},t}^{\beta-1} u(0,z)=\lambda^{\beta-1}(\partial_{\Gamma_{1/\kappa},z})\varphi(z)$
 for some $\varphi(z)\in\Oo(\hat{S}_{d+\arg\lambda}\cap D_R)$.

By \cite[Lemma 3]{Mic8} we get
\begin{gather*}
\psi_n(t)=\partial^n_{\Gamma_{1/\kappa},z}u(t,0)=\frac{t^{\beta-1}}{(\beta-1)!}\partial_t^{\beta-1}
   \frac{1}{2\pi i}\oint_{|w|=\delta}\varphi(w)
   \int_{r_0e^{i\theta}}^{\infty(\theta)}\mathbf{E}_{1/\kappa}(t\lambda(\zeta))
    \zeta^{n}\kappa\zeta^{\kappa-1}w^{\kappa-1}e^{-\zeta^{\kappa}w^{\kappa}}d\zeta\,dw,
\end{gather*}
  where $\theta\in (-\arg w-\frac{\pi}{2\kappa}, -\arg w + \frac{\pi}{2\kappa})$.
  
  Since the Mittag-Leffler function $\mathbf{E}_{\alpha}(z)$
  is entire function of exponential growth of order $1/\alpha$ and of type $1$ (see \cite[Appendix B.4]{B2}), taking $\theta=-\arg w$ we conclude that for every $\varepsilon>0$ we may find $A<\infty$ such that
\begin{gather*}
 \Big|\int_{r_0e^{i\theta}}^{\infty(\theta)}\mathbf{E}_{1/\kappa}(t\lambda(\zeta))
    \zeta^{n}\kappa\zeta^{\kappa-1}w^{\kappa-1}e^{-\zeta^{\kappa}w^{\kappa}}d\zeta\Big| \le
    \int_{r_0}^{\infty}Ae^{((1+\varepsilon)|t|^{\kappa}|\lambda|^{\kappa}-|w|^{\kappa})s^{\kappa}}s^{\kappa-1}w^{\kappa-1}s^{n}\,ds<\infty
\end{gather*}
for $(1+\varepsilon)|t|^{\kappa}|\lambda|^{\kappa}<|w|^{\kappa}$. Since $\varepsilon > 0$ is arbitrary, it follows that the integral is
convergent for $|t|<|w|/|\lambda|$, hence $\psi_n(t)\in\Oo(D)$. Furthermore,
we deforme the path of integration with respect to $w$ to $\gamma_R$ as in
\cite[Lemma 4]{Mic8}. Here $\gamma_R$ is a contour became from the circle $|w|=\delta$ by the deforming the arc $\{w\colon |w|=\delta,\ |\arg w-d-\arg\lambda|<\beta/2\}$ (for some $\beta$ in the opening of $\hat{S}_{d+\arg\lambda}$) into the path along the ray $\arg w = d+\arg\lambda-\beta/2$ to a point with modulus $R$ (which can be
chosen arbitrarily large), then along the circle $|w| = R$ to the ray $\arg w = d + \arg\lambda + \beta/2$ and back along this
ray to the original circle. Using this deformed contour 
we conclude that $\psi_n(t)$ is analytically continued to the set $\hat{S}_d\cap D_{R/|\lambda|}$ under condition that $\varphi(z)\in\Oo(\hat{S}_{d+\arg\lambda}\cap D_R)$. 

Repeating the estimations from \cite[Lemma 4]{Mic8}  we conclude that for any $\varepsilon>0$ there exists $\tilde{C}_1,C_1<\infty$ independent of $\varphi(z)$ such that
\[
 |\psi_n(t)|\leq \tilde{C}_1 \oint_{\gamma_R}|\varphi(w)|\,d|w|\leq  C_1A_1e^{B_1(1+\varepsilon)^K|\lambda|^K|t|^K}\quad\text{for}\quad n=0,\dots,\beta-1,
\]
where $R=(1+\varepsilon/2)|t||\lambda|$.
\medskip\par
($\Longleftarrow$)
To prove the lemma in the opposite side, we observe that by \cite[Lemmas 6 and 7]{Mic7},
if $u$ satisfies the equation (\ref{eq:invertible}) then $u$ is also a solution
of the equation
\begin{equation}
\label{eq:invertible_2}
(\partial_{\Gamma_{1/\kappa},z}-\lambda^{-1}(\partial_{\Gamma_{1/\kappa,t}}))^{\beta}u=0.
\end{equation}
Since $u$ satisfies (\ref{eq:invertible_2}) and $\lambda^{-1}(\zeta)\sim\lambda^{-1}\zeta$, swapping the role of variables and repeating the first part of the proof we
get the assertion.
\end{proof}

Now we are ready to prove the
next two crucial lemmas about the analytic continuation of the solutions of the Goursat problem
\begin{Lem}
\label{le:crucial_2}
Let $s=a-1$, $K\geq\frac{a\kappa}{a-b}$ and $(j,\alpha)\in{\rm conv\,}\{\mathring{N}_s\}$.
Assume that the operator
$L(\partial_t,\partial_z)=P(\partial_t,\partial_z)\partial_t^{-j}\partial_z^{-\alpha}$ has Fredholm property on $G^s_w$ and is bijective.

Then the Goursat problem
 \begin{equation}
  \label{eq:Goursat_Q}
  \left\{
   \begin{array}{l}
    Q(\partial_{\Gamma_{1/\kappa},t},\partial_{\Gamma_{1/\kappa},z})w(t,z)=0\\
     w(t,z)-v(t^{a\kappa},z^{\kappa})=O(t^{\kappa j} z^{\kappa\alpha})\quad\textrm{for some}\quad v(t,z)\in\Oo(D^2)
   \end{array}
  \right.
\end{equation}
is uniquely solvable in the space of holomorphic functions $\Oo(D^2)$,
where 
\begin{equation}
\label{eq:Q}
Q(\partial_{\Gamma_{1/\kappa},t},\partial_{\Gamma_{1/\kappa},z}):=
P(\partial^{a\kappa}_{\Gamma_{1/\kappa},t},\partial^{\kappa}_{\Gamma_{1/\kappa},z}).
\end{equation}

Moreover, we may factorise the solution $w$ as
$$
w(t,z)=\sum_{j=1}^2\sum_{k=1}^{m_j}\sum_{l=1}^{\alpha_{jk}}\sum_{n=0}^{a\kappa-1}w_{jkln}(t,z),
$$
where $w_{jkln}(t,z)$ satisfies the equation
\begin{equation*}
 \left\{
  \begin{array}{l}
    (\partial_{\Gamma_{1/\kappa},t}-\lambda_{jkn}(\partial_{\Gamma_{1/\kappa},z}))^lw_{jkln}(t,z)=0\\
     \partial_{\Gamma_{1/\kappa},t}^j w_{jkln}(0,z)=0,\ j=0,\dots,l-2\\
     \partial_{\Gamma_{1/\kappa},t}^{l-1} w_{jkln}(0,z)=(l-1)!\varphi_{jkln}(z)\in\Oo_{1/\kappa}(D)
   \end{array}
  \right.
\end{equation*}
with $\lambda_{jkn}(\zeta):=e^{\frac{2n\pi i}{a\kappa}}\lambda_{jk}^{1/a\kappa}(\zeta^{\kappa})$.
In particular $\lambda_{1kn}(\zeta)$ is holomorphic and invertible for sufficiently large $|\zeta|$ with a simple pole at infinity. Additionally $w_{1kln}(t,z)\in\Oo(D^2)$ and $w_{2kln}\in\Oo^K_{1,1/\kappa}(\CC\times D)$.
\end{Lem}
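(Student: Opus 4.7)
The plan is to solve the $Q$-Goursat problem indirectly: first solve the corresponding Gevrey-$s$ Goursat problem for $P$ via Theorem \ref{th:2}, and then lift the Gevrey solution through the modified Borel transform in $t$ composed with the ramified substitution $(t,z)\mapsto(t^{a\kappa},z^\kappa)$. The factorization of $w$ will then follow by combining the decomposition supplied by Lemma \ref{le:inhomogeneous} with the symbol-level identity $\lambda^{a\kappa}-\lambda_{jk}(\zeta^\kappa)=\prod_{n=0}^{a\kappa-1}(\lambda-\lambda_{jkn}(\zeta))$.

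Concretely, I would first choose a Gevrey Goursat datum $v_P\in\Oo[[t]]_s$ for $P$ so that $\Bo_s v_P(t^{a\kappa},z^\kappa)$ matches $v(t^{a\kappa},z^\kappa)$ up to the prescribed order $t^{\kappa j}z^{\kappa\alpha}$. The assumed bijectivity of $L$ on $G^s_w$ then yields, via Theorem \ref{th:2}, a unique $u\in\Oo[[t]]_s$ with $P(\partial_t,\partial_z)u=0$ and $u-v_P=O(t^jz^\alpha)$. By Remark \ref{re:5}, $\tilde{u}:=\Bo_s u\in\Oo(D^2)$ satisfies $P(\partial_{\Gamma_a,t},\partial_z)\tilde{u}=0$. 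The key computational ingredient is the substitution identity $(\partial_{\Gamma_q,t}\varphi)(t^\kappa)=\partial^\kappa_{\Gamma_{q/\kappa},t}(\varphi(t^\kappa))$, which extends \cite[Lemma 3]{Mic7} by a direct Gamma-function calculation. Applying it with $(q,\kappa)=(a,a\kappa)$ in the $t$-variable (so that $q/\kappa=1/\kappa$) and with $p=1$ in $z$, one verifies that $w(t,z):=\tilde{u}(t^{a\kappa},z^\kappa)\in\Oo(D^2)$ solves (\ref{eq:Goursat_Q}). Uniqueness transfers from the Gevrey bijectivity by reversing the lift.

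For the decomposition, Lemma \ref{le:inhomogeneous} in its homogeneous form (after absorbing the Goursat part into an auxiliary Cauchy problem) gives $\tilde{u}=\sum_{j,k,l}\tilde{u}_{jkl}$ with $(\partial_{\Gamma_a,t}-\lambda_{jk}(\partial_z))^l\tilde{u}_{jkl}=0$ and vanishing of lower $\Gamma_a$-derivatives at $t=0$. The symbol identity above, combined with the Jordan-block type superposition inside each $(\partial_{\Gamma_{1/\kappa},t}-\lambda_{jkn}(\partial_{\Gamma_{1/\kappa},z}))^{\alpha_{jk}}$ block, produces the finer decomposition $w=\sum w_{jkln}$ announced in the statement. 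For $j=1$, Lemma \ref{le:inverse} applied to $\lambda_{1k}(\zeta)\sim\lambda_{1k}\zeta^a$ shows that each $\lambda_{1kn}$ is holomorphic and invertible for large $|\zeta|$ with a simple pole at infinity, and $w_{1kln}\in\Oo(D^2)$ follows from $\tilde{u}_{1kl}\in\Oo(D^2)$. For $j=2$, each $\lambda_{2kn}$ has pole order $q_k/a\leq b/a<1$ at infinity, so the saddle-point estimate of the kernel $k_{2kl}$ from the proof of Lemma \ref{le:inhomogeneous} transfers directly. The assumption $K\geq a\kappa/(a-b)$ is precisely what is needed to control the exponential type in $t$, since the substitution $t\mapsto t^{a\kappa}$ multiplies the natural growth exponent $1/(a-b)$ by a factor of $a\kappa$; one thus obtains $w_{2kln}\in\Oo^K_{1,1/\kappa}(\CC\times D)$.

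The main obstacle will be the combinatorial bookkeeping of the Goursat data through the successive reductions. One must (i) choose $v_P$ so that the Borel transform followed by the ramification produces exactly the prescribed vanishing order $t^{\kappa j}z^{\kappa\alpha}$, and (ii) distribute the single lifted datum uniquely across the $a\kappa$-many simple pseudodifferential factors $(\partial_{\Gamma_{1/\kappa},t}-\lambda_{jkn})^{\alpha_{jk}}$ so that each $w_{jkln}$ is determined by an analytic Goursat datum $\varphi_{jkln}\in\Oo_{1/\kappa}(D)$. This is exactly where full bijectivity of $L$ (and not merely its Fredholm property) is used, eliminating the cokernel obstruction to matching the mixed initial and boundary data.
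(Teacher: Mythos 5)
Your overall route is the same as the paper's: solve the Gevrey Goursat problem for $P$ by Theorem \ref{th:1} (equivalently Theorem \ref{th:2}), pass to $\tilde{w}:=\Bo_s u$ via Remark \ref{re:5}, use the substitution identities $(\partial_{\Gamma_a,t}\tilde{w})(t^{a\kappa},z^{\kappa})=\partial^{a\kappa}_{\Gamma_{1/\kappa},t}\big(\tilde{w}(t^{a\kappa},z^{\kappa})\big)$ and $(\partial_z\tilde{w})(t^{a\kappa},z^{\kappa})=\partial^{\kappa}_{\Gamma_{1/\kappa},z}\big(\tilde{w}(t^{a\kappa},z^{\kappa})\big)$ to see that $w=\tilde{w}(t^{a\kappa},z^{\kappa})$ solves (\ref{eq:Goursat_Q}), factor $Q$ through the $a\kappa$-th roots $\lambda_{jkn}$, invoke Lemma \ref{le:inverse} for the invertibility of $\lambda_{1kn}$, and control growth through $K\geq\frac{a\kappa}{a-b}$. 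So the architecture matches.

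The genuine gap is the source of the decomposition $w=\sum_{j,k,l,n} w_{jkln}$. Lemma \ref{le:inhomogeneous} cannot deliver it: its statement concerns the inhomogeneous Cauchy problem with zero initial data and contains no decomposition at all (in your ``homogeneous form'' with zero data the solution is identically zero), and ``absorbing the Goursat part into an auxiliary Cauchy problem'' does not by itself split the solution into blocks attached to the individual factors, nor produce analytic data $\varphi_{jkln}\in\Oo_{1/\kappa}(D)$ for each block. What the paper actually uses here is the external decomposition theorem \cite[Theorem 1]{Mic8} for homogeneous equations with Cauchy data, applied twice: once to $u$ (factorisation of $P$, giving $u_{jkl}$ of Gevrey order $s$, respectively $q_k-1\leq b-1$, whence $w_{2kl}\in\Oo^K(\CC\times D)$ after Borel transform and ramification), and once to $w$ itself (factorisation of $Q$ into the $a\kappa$-fold product of simple factors, giving the $w_{jkln}$). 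The same circle of results (\cite[Theorem 1]{Mic8}, see also \cite[Theorem 1 and formula (17)]{Mic7}) is needed a third time for the point you dispatch in one clause: a priori each $w_{1kln}$ lies only in $\Oo_{1,1/\kappa}(D^2)$, and the conclusion $w_{1kln}\in\Oo(D^2)$ is drawn from the fact that the sum $\sum_{n=0}^{a\kappa-1}w_{1kln}=w_{1kl}$ belongs to $\Oo(D^2)$, not directly from $\tilde{u}_{1kl}\in\Oo(D^2)$. Finally, bijectivity of $L$ enters only through the unique solvability of the Gevrey problem (and hence of (\ref{eq:Goursat_Q})); the distribution of the data across the simple factors is the content of the decomposition theorem and does not use bijectivity, contrary to your closing remark.
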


\begin{proof}
By Theorem \ref{th:1}, for given $\tilde{v}(t,z)\in\Oo[[t]]_s$ there exists the unique
solution $u(t,z)\in\Oo[[t]]_s$ of the Goursat problem
\begin{equation}
  \label{eq:Goursat_u}
  \left\{
   \begin{array}{l}
    P(\partial_{t},\partial_{z})u(t,z)=0\\
     u(t,z)-\tilde{v}(t,z)=O(t^j z^{\alpha}).
   \end{array}
  \right.
\end{equation}

We will show that $w(t,z):=(\Bo_{s} u)(t^{a\kappa},z^{\kappa})\in\Oo(D^2)$
 satisfies the Goursat problem (\ref{eq:Goursat_Q}) with $v(t,z):=\Bo_{s}\tilde{v}(t,z)\in\Oo(D^2)$.
 
We prove it in a similar way to \cite[Lemma 3]{Mic8}. 
Let $\tilde{w}(t,z):=\Bo_{s} u(t,z)$. Since
$\Bo_{s}\partial_t u(t,z)=\partial_{\Gamma_a,t}\Bo_{s} u(t,z)$ (see Remark \ref{re:5}),
we deduce that $\tilde{w}(t,z)$ satisfies the Goursat problem
\begin{equation*}
  \left\{
   \begin{array}{l}
    P(\partial_{\Gamma_a,t},\partial_z)\tilde{w}(t,z)=0\\
     \tilde{w}(t,z)-v(t,z)=O(t^j z^{\alpha}).
   \end{array}
  \right.
\end{equation*}

Next, observe that 
\begin{gather*}
(\partial_{\Gamma_a,t}\tilde{w})(t^{a\kappa},z^{\kappa})=\partial^{a\kappa}_{\Gamma_{1/\kappa},t}(\tilde{w}(t^{a\kappa},z^{\kappa}))\quad\text{and}\quad
(\partial_{z}\tilde{w})(t^{a\kappa},z^{\kappa})=\partial^{\kappa}_{\Gamma_{1/\kappa},z}(\tilde{w}(t^{a\kappa},z^{\kappa})).
\end{gather*}

Hence $w(t,z):=\tilde{w}(t^{a\kappa},z^{\kappa})$ satisfies (\ref{eq:Goursat_Q}).

Since $u(t,z)$ is a solution of (\ref{eq:Goursat_u}), by \cite[Theorem 1]{Mic8}
$$
u(t,z)=\sum_{j=1}^2\sum_{k=1}^{m_j}\sum_{l=1}^{\alpha_{jk}}u_{jkl}(t,z),
$$
where $u_{jkl}(t,z)$ satisfies
\begin{equation*}
 \left\{
  \begin{array}{l}
    (\partial_{t}-\lambda_{jk}(\partial_{z}))^lu_{jkl}(t,z)=0\\
     \partial^n_{t} u_{jkl}(0,z)=0,\ n=0,\dots,l-2\\
     \partial^{l-1}_{t} u_{jkl}(0,z)=(l-1)!\varphi_{jkl}(z)
   \end{array}
  \right.
\end{equation*}
for some $\varphi_{jkl}(t,z)\in\Oo_{1/\kappa}(D)$.
By \cite[Theorem 1]{Mic8}
we also conclude that $u_{1kl}(t,z)\in\Oo_{1/\kappa}[[t]]_s$ and
$u_{2kl}(t,z)\in\Oo_{1/\kappa}[[t]]_{q_k-1}\subseteq\Oo_{1/\kappa}[[t]]_{b-1}.$ 

Hence $w_{jkl}(t,z):=(\Bo_{s}u_{jkl})(t^{\alpha\kappa},z^{\kappa})$ satisfies the equation
\begin{equation*}
    (\partial_{\Gamma_{1/\kappa},t}^{a\kappa}-\lambda_{jk}(\partial_{\Gamma_{1/\kappa},z}^{\kappa}))^lw_{jkl}(t,z)=0.
\end{equation*}
Moreover $w_{1kl}(t,z)\in\Oo(D^2)$
and $w_{2kl}(t,z)\in\Oo^K(\CC\times D)$.

On the other hand, observe that
$$
Q(\partial_{\Gamma_{1/\kappa},t},\partial_{\Gamma_{1/\kappa},z})=
\prod_{j=1}^2\prod_{k=1}^{m_j}\prod_{n=0}^{a\kappa-1}(\partial_{\Gamma_{1/\kappa},t}-
\lambda_{jkn}(\partial_{\Gamma_{1/\kappa},z}))^{\alpha_{jk}},$$
where
$\lambda_{jkn}(\partial_{\Gamma_{1/\kappa},z}):=e^{\frac{2n\pi i}{a\kappa}}\lambda_{jk}^{1/\kappa\alpha}(\partial_{\Gamma_{1/\kappa},z}^{\kappa})$.

It means that 
$\lambda_{1kn}(\zeta)\sim e^{\frac{2n\pi i}{a\kappa}}\lambda_{1k}^{1/\kappa\alpha}\zeta$ and
$\lambda_{2kn}(\zeta)\sim e^{\frac{2n\pi i}{a\kappa}}\lambda_{2k}^{1/\kappa\alpha}
\zeta^{q_k/a}$.

So, since $w$ is a solution
of (\ref{eq:Goursat_Q}), once again by \cite[Theorem 1]{Mic8}, we conclude that
$$
w(t,z)=\sum_{j=1}^2\sum_{k=1}^{m_j}\sum_{l=1}^{\alpha_{jk}}\sum_{n=0}^{a\kappa-1}w_{jkln}(t,z),
$$
where $w_{jkln}(t,z)$ satisfies the equation
\begin{equation*}
 \left\{
  \begin{array}{l}
    (\partial_{\Gamma_{1/\kappa},t}-\lambda_{jkn}(\partial_{\Gamma_{1/\kappa},z}))^lw_{jkln}(t,z)=0\\
     \partial_{\Gamma_{1/\kappa},t}^j w_{jkln}(0,z)=0,\ j=0,\dots,l-2\\
     \partial_{\Gamma_{1/\kappa},t}^{l-1} w_{jkln}(0,z)=(l-1)!\varphi_{jkln}(z)\in\Oo_{1/\kappa}(D).
   \end{array}
  \right.
\end{equation*}
Moreover $w_{1kln}(t,z)\in\Oo_{1,1/\kappa}(D^2)$ and $w_{2kln}(t,z)\in\Oo_{1,1/\kappa}^K(\CC\times D)$.

Additionally by Lemma \ref{le:inverse}
the function $\lambda_{1kn}(\zeta)$ is holomorphic and invertible in $\zeta$ for sufficiently large $\zeta$ with a simple pole at infinity.
So, since $\sum_{n=0}^{a\kappa-1}w_{1kln}(t,z)=w_{1kl}(t,z)\in\Oo(D^2)$ by
\cite[Theorem 1]{Mic8} (see also 
\cite[Theorem 1 and formula (17)]{Mic7})
we conclude that $w_{1kln}(t,z)\in\Oo(D^2)$.
\end{proof}

In the next lemma for abbreviation we change the notation connected with the factorisation of the operator $Q$.
\begin{Lem}
\label{le:crucial_3}
Suppose that the constants $a,b,\kappa$ satisfy the conditions of Lemma \ref{le:crucial_2} and the operator $Q$ is defined by (\ref{eq:Q}). 

Then we may 
factorise
the operator $Q$ as
$$
Q(\partial_{\Gamma_{1/\kappa},t},\partial_{\Gamma_{1/\kappa},z})=
\prod_{j=1}^2\prod_{k=1}^{m_j}(\partial_{\Gamma_{1/\kappa},t}-\lambda_{jk}(\partial_{\Gamma_{1/\kappa},z}))^{\alpha_{jk}}
$$
for some constants $m_j$, $\alpha_{jk}$ and for some algebraic functions $\lambda_{jk}(\zeta)$ which are holomorphic functions of the variable $\xi=\zeta^{\kappa}$ for $|\zeta|\geq r_0$ (for some $r_0>0$), such that $\lambda_{2k}(\zeta)\sim \lambda_{2k}\zeta^{q_k/a}$, $q_k\leq b$, and
$\lambda_{1k}(\zeta)\sim \lambda_{1k}\zeta$ is additionally an analytic and invertible function of the variable $\zeta$ for $|\zeta|\geq r_0$  with a simple pole at infinity.

Moreover, if we additionally assume that $d,\eta\in\RR$, $K\geq\frac{a\kappa}{a-b}$, $p\in\NN$ and  $\arg\lambda_{1k}\in\{(\eta+2n\pi/p)/\kappa\colon n\in\ZZ\}$ for $k=1,...,m_1$
then the solution $w(t,z)$ of the Goursat problem
 \begin{equation}
  \label{eq:Goursat_Q2}
  \left\{
   \begin{array}{l}
    Q(\partial_{\Gamma_{1/\kappa},t},\partial_{\Gamma_{1/\kappa},z})w(t,z)=0\\
     w(t,z)-v(t,z)=O(t^{j\kappa} z^{\kappa\alpha})\quad\textrm{for some}\quad v(t,z)\in\Oo(D^2)
   \end{array}
  \right.
\end{equation}
belongs to the space $\Oo^{K,K}(\hat{S}_{(d+2l\pi)/\kappa}\times\hat{S}_{(d+\eta+2n\pi/p)/\kappa})$ for $n,l\in\ZZ$ if and only if the Goursat data $v(t,z)$ belong to the same space. 
\end{Lem}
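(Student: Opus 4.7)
\emph{Proof plan.} The plan is to reduce to the single-factor equations via Lemma \ref{le:crucial_2} and then analyse each factor separately, using Lemma \ref{le:crucial} for the $j=1$ parts (where $\lambda_{1k}$ is invertible at infinity) and an integral-representation argument of the type appearing in the proof of Lemma \ref{le:inhomogeneous} for the $j=2$ parts (where $\lambda_{2k}$ has strictly lower order). By Lemma \ref{le:crucial_2} the unique solution $w(t,z)\in\Oo(D^2)$ of (\ref{eq:Goursat_Q2}) decomposes as $w=\sum_{j=1}^2\sum_{k=1}^{m_j}\sum_{l=1}^{\alpha_{jk}}w_{jkl}$, where $w_{jkl}$ satisfies $(\partial_{\Gamma_{1/\kappa},t}-\lambda_{jk}(\partial_{\Gamma_{1/\kappa},z}))^l w_{jkl}=0$ with Cauchy-type data $\varphi_{jkl}(z)\in\Oo_{1/\kappa}(D)$ depending linearly on $v$, and moreover $w_{1kl}\in\Oo(D^2)$ while $w_{2kl}\in\Oo^K_{1,1/\kappa}(\CC\times D)$. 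Since the equivalence to be proved is linear in the data, it suffices to treat each summand separately.

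The reverse implication will follow by restriction: if $w$ lies in the prescribed $\Oo^{K,K}$-space on every product sector, then the axis restrictions $\partial_t^k w(0,z)$ and $\partial_z^\beta w(t,0)$ (obtained by Cauchy's formula on a small circle inside $D\subset\hat{S}$) inherit exponential growth of order $K$ on the corresponding single-variable sectors; and since $v$ is a Goursat data, formula (\ref{eq:any_v}) with $r\equiv 0$ writes it as a polynomial combination of these restrictions plus a compatibility polynomial, yielding $v\in\Oo^{K,K}$ on the same family.

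For the forward implication I would assume $v\in\Oo^{K,K}$ on the required family, so that each $\varphi_{jkl}$ inherits growth of order $K$ on every $z$-sector $\hat{S}_{(d+\eta+2n\pi/p)/\kappa}$, $n\in\ZZ$. For the $j=1$ summands, the angular hypothesis $\arg\lambda_{1k}=(\eta+2n_k\pi/p)/\kappa$ yields the alignment identity
\[
\frac{d+2l\pi}{\kappa}+\arg\lambda_{1k}=\frac{d+\eta+2(lp+n_k)\pi/p}{\kappa},
\]
so Lemma \ref{le:crucial}(a) applied with direction $d'=(d+2l\pi)/\kappa$ and $\lambda=\lambda_{1k}$ transports the growth bound on $\varphi_{1kl}$ (which lives on the $z$-sector on the right-hand side, by assumption on $v$) to growth of order $K$ of the slice $\partial^n_{\Gamma_{1/\kappa},z}w_{1kl}(t,0)$ on $\hat{S}_{(d+2l\pi)/\kappa}$; a contour deformation on the integral representation of $w_{1kl}$ in the spirit of \cite[Lemma 4]{Mic8}, exactly as in the proof of Lemma \ref{le:inhomogeneous}, then upgrades this to joint growth of order $K$ of $w_{1kl}(t,z)$ on the full product sector. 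For the $j=2$ summands, growth of order $K$ in $t$ is already built into $w_{2kl}\in\Oo^K_{1,1/\kappa}(\CC\times D)$, so it remains to extend to the $z$-sectors; I would apply the integral representation of $w_{2kl}$ in terms of $\varphi_{2kl}$ and deform the $w$-contour exactly as for the second factor in the proof of Lemma \ref{le:inhomogeneous}, where the estimate on $k_{2kl}$ obtained there, combined with $K\geq\tfrac{a\kappa}{a-b}\geq\tfrac{a\kappa}{a-q_k}$, delivers growth of order $K$ in $z$ on the specified sectors.

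The main technical obstacle I expect is the $j=1$ step: producing genuine joint $\Oo^{K,K}$-growth in both variables simultaneously (not merely on the two axis slices) with exponential type controlled by that of $v$, and ensuring that as $k\in\{1,\dots,m_1\}$ and $l\in\ZZ$ vary, the diagonals $n=lp+n_k$ reached by Lemma \ref{le:crucial}, together with the $z$-analyticity provided on every $z$-sector by the integral representation, consistently cover the whole $(n,l)$-indexed family of target product sectors.
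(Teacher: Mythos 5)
Your reduction via Lemma \ref{le:crucial_2} and the easy direction (restriction to the axes) match the paper, but the forward implication has a genuine gap at its central point: you assert that ``each $\varphi_{jkl}$ inherits growth of order $K$ on every $z$-sector \dots by assumption on $v$''. This is precisely what is not automatic and is the whole difficulty of the lemma. The Goursat data prescribe only the first $J=j\kappa$ traces $\partial^n_{\Gamma_{1/\kappa},t}w(0,z)$ and the first $M-J$ traces $\partial^n_{\Gamma_{1/\kappa},z}w(t,0)$, while the decomposition produces $M$ unknown Cauchy data $\varphi_{jkl}$, about which Lemma \ref{le:crucial_2} only tells you $\varphi_{jkl}\in\Oo_{1/\kappa}(D)$ --- analyticity on a small disc, with no sectorial continuation and no growth bound. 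The dependence of the $\varphi_{jkl}$ on $v$ is through a coupled system that mixes information from both axes, so sector-analyticity of $v$ does not transfer to the $\varphi_{jkl}$ by any direct linear-substitution argument; with only this input, your subsequent application of Lemma \ref{le:crucial}(a) and the contour deformations cannot get started.

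What the paper actually does, and what is missing from your plan, is the following bootstrap. Using the invertibility of $\lambda_{1k}$ one rewrites the $j=1$ blocks also as solutions of the swapped equation in $z$ with data $\psi_{1kl}(t)$ on the other axis; the Goursat conditions then become a linear system (with confluent Vandermonde matrices, invertible for large $|\zeta|$) relating the unknowns $\varphi_{1kl},\varphi_{2kl},\psi_{1kl}$ to the known sectorial data coming from $v$, as in (\ref{eq:system_solved}). One then iterates Lemma \ref{le:crucial} back and forth between the two axes: each round enlarges the radius of analyticity by the factor $|\lambda_{1,\tilde m_1}|/|\lambda_{1,\tilde m_1+1}|>1$, and this strict inequality --- which comes from the assumed bijectivity/Fredholm condition via Remark \ref{re:H_w} and is never invoked in your proposal --- is what lets the radius tend to infinity and, after choosing $\varepsilon$ small enough that $(1+\varepsilon)^4|\lambda_{1,\tilde m_1+1}|/|\lambda_{1,\tilde m_1}|\le 1$, keeps the exponential type bounded, the accumulated constants $\tilde C^N$ being absorbed as a polynomial factor. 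Only after this does one know the Cauchy data lie in $\Oo^K$ on the sector family, and the joint $\Oo^{K,K}$ bound for $w$ then follows from the cited result of \cite{Mic8} (your ``main technical obstacle'' is in fact the routine part). Without the mixed system and the gap-driven iteration, the proof does not go through.
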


\begin{proof}
Observe that the first part of the lemma is given immediately by Lemma \ref{le:crucial_2} with the changed meaning of the constants $m_j,\alpha_{jk}$, and with algebraic functions $\lambda_{jkn}(\zeta)$
replaced by $\lambda_{jk}(\zeta)$.

To prove the second part we take the constants $M_1,M_2,M,J\in\NN$ defined by
$M_j:=\sum_{k=1}^{m_j}\alpha_{jk}$ ($j=1,2$), $M:=M_1+M_2$ and $J:=j\kappa$.

Since the Goursat problem (\ref{eq:Goursat_Q2}) satisfies the condition (\ref{eq:H_w}) 
there exist $\tilde{m}_1\in\{1,...,m_1\}$ such that 
$\sum_{k=1}^{\tilde{m}_1}\alpha_{1k}=M-J$.

Moreover, by Remark \ref{re:H_w} we may assume that
$|\lambda_{1,1}|\geq...\geq|\lambda_{1,\tilde{m}_1}|>|\lambda_{1,\tilde{m}_1+1}|\geq...\geq |\lambda_{1,m_1}|$.

If $w(t,z)\in\Oo^{K,K}(\hat{S}_{(d+2l\pi)/\kappa}\times\hat{S}_{(d+\eta+2n\pi/p)/\kappa})$ for $l,n\in\ZZ$ then, directly by the definition, $v(t,z)$  belongs to the same space. For this reason it is sufficient to prove the implication in the opposite side.

So we assume that  $v(t,z)\in\Oo^{K,K}(\hat{S}_{(d+2l\pi)/\kappa}\times\hat{S}_{(d+\eta+2n\pi/p)/\kappa})$ for $l,n\in\ZZ$. We will show that $w(t,z)$ belongs to the same space.

Since the Goursat problem (\ref{eq:Goursat_Q2}) is uniquely solvable in the space of holomorphic functions $\Oo(D^2)$, its solution $w(t,z)$ satisfies also the Cauchy problem
\begin{equation*}
  \left\{
   \begin{array}{l}
    Q(\partial_{\Gamma_{1/\kappa},t},\partial_{\Gamma_{1/\kappa},z})w(t,z)=0\\
     \partial^j_{\Gamma_{1/\kappa},t}w(0,z)=\varphi_j(z)\quad\text{for}\quad j=0,...,M-1,
   \end{array}
  \right.
\end{equation*}
for some $\varphi_j(z)\in\Oo(D)$.

Hence, by \cite[Theorem 1]{Mic8}
$$
w(t,z)=\sum_{j=1}^2\sum_{k=1}^{m_j}\sum_{l=1}^{\alpha_{jk}}w_{jkl}(t,z),
$$
where $w_{jkl}(t,z)$ satisfies
\begin{equation*}
 \left\{
  \begin{array}{l}
    (\partial_{\Gamma_{1/\kappa},t}-\lambda_{jk}(\partial_{\Gamma_{1/\kappa},z}))^lw_{jkl}(t,z)=0\\
     \partial^n_{\Gamma_{1/\kappa},t} w_{jkl}(0,z)=0,\ n=0,\dots,l-2\\
     \partial^{l-1}_{\Gamma_{1/\kappa},t} w_{jkl}(0,z)=(l-1)!\varphi_{jkl}(z)\in\Oo_{1/\kappa}(D),
   \end{array}
  \right.
\end{equation*}
and $w_{1kl}(t,z)\in\Oo_{1,1/\kappa}(D^2)$, $w_{2kl}(t,z)\in\Oo_{1,1/\kappa}^{\frac{a\kappa}{a-q_k}}(\CC\times D)$.

By Lemma \ref{le:crucial_2} 
we see that $\varphi_{1kl}(z)\in\Oo(D)$,
hence $w_{1kl}(t,z)\in\Oo(D^2)$.
Since $q_k\leq b$, we also conclude that $w_{2kl}(t,z)\in\Oo_{1,1/\kappa}^{K}(\CC\times D)$.

Observe that $w_{1k}(t,z):=\sum_{l=1}^{\alpha_{1k}}w_{1kl}(t,z)$ satisfies the equation
$$
(\partial_{\Gamma_{1/\kappa},t}-\lambda_{1k}(\partial_{\Gamma_{1/\kappa},z}))^{\alpha_{1k}}w_{1k}(t,z)=0.
$$
Hence, since $\lambda_{1k}(\zeta)\sim\lambda_{1k}\zeta$ is invertible and holomorphic with respect to $\zeta$ for $|\zeta|\geq r_0$ with a simple pole at infinity, by \cite[Lemma 7]{Mic7}, $w_{1k}(t,z)$ satisfies also the equation
$$
(\partial_{\Gamma_{1/\kappa},z}-\lambda_{1k}^{-1}(\partial_{\Gamma_{1/\kappa},t}))^{\alpha_{1k}}w_{1k}(t,z)=0.
$$
So, by linearity we may write $w_{1k}(t,z)=\sum_{l=1}^{\alpha_{1k}}\tilde{w}_{1kl}(t,z)$, where $\tilde{w}_{1kl}(t,z)$
is a solution of the Cauchy problem with replaced variables
\begin{equation*}
 \left\{
  \begin{array}{l}
    (\partial_{\Gamma_{1/\kappa},z}-\lambda_{1k}^{-1}(\partial_{\Gamma_{1/\kappa},t}))^l\tilde{w}_{1kl}(t,z)=0\\
     \partial^n_{\Gamma_{1/\kappa},z} \tilde{w}_{1kl}(t,0)=0,\ n=0,\dots,l-2\\
     \partial^{l-1}_{\Gamma_{1/\kappa},z} \tilde{w}_{1kl}(t,0)=(l-1)!\psi_{1kl}(t)
   \end{array}
  \right.
\end{equation*}
for some $\psi_{1kl}(t)\in\Oo(D)$.

We may write our Goursat problem as follows:
$$\partial^n_{\Gamma_{1/\kappa},t}w(0,z)=\partial^n_{\Gamma_{1/\kappa},t}v(0,z)
=:\Phi_{n}(z)\in\Oo^{K}\big(\bigcup_{k\in\ZZ}\hat{S}_{(d+\eta+2k\pi/p)/\kappa}\big),\quad n=0,...,J-1$$
$$\partial^n_{\Gamma_{1/\kappa},z}w(t,0)=\partial^n_{\Gamma_{1/\kappa},z}v(t,0)
=:\Psi_{n}(t)\in\Oo^{K}\big(\bigcup_{k\in\ZZ}\hat{S}_{(d+2k\pi)/\kappa}\big)),\quad n=0,...,M-J-1.$$

By \cite[Lemma 2]{Mic7}
\begin{equation*}
 w_{jkl}(t,z)=\sum_{m=l-1}^{\infty}\frac{m!}{(m-l+1)!}\lambda_{jk}(\partial_{\Gamma_{1/\kappa},z})^{m-l+1}\frac{\varphi_{jkl}(z)}{\Gamma(1+m/\kappa)}t^m
\end{equation*}
and similarly
\begin{equation*}
 \tilde{w}_{1kl}(t,z)=\sum_{m=l-1}^{\infty}\frac{m!}{(m-l+1)!}\lambda^{-1}_{1k}(\partial_{\Gamma_{1/\kappa},t})^{m-l+1}\frac{\psi_{1kl}(t)}{\Gamma(1+m/\kappa)}z^m.
\end{equation*}
It means that the Goursat data generate the following system of equations
\begin{gather*}
\left\{
  \begin{array}{l}
   \sum_{j=1}^2\sum_{k=1}^{m_j}\sum_{l=1}^{\max\{\alpha_{jk},n\}}
   \frac{(n-1)!}{(n-l)!}\lambda_{jk}(\partial_{\Gamma_{1/\kappa},z})^{n-l}\varphi_{jkl}(z)=\Phi_{n-1}(z),\ n=1,...,J\\
   \sum_{k=1}^{m_1}\sum_{l=1}^{\max\{\alpha_{1k},n\}}
   \frac{(n-1)!}{(n-l)!}\lambda_{1k}^{-1}(\partial_{\Gamma_{1/\kappa},t})^{n-l}\psi_{1kl}(t)=
   \tilde{\Psi}_{n-1}(t),\ n=1,...,M-J,\\
  \end{array}
  \right.
\end{gather*}
where
$$\tilde{\Psi}_{n}(t):=\Psi_n(t)-\sum_{k=1}^{m_2}\sum_{l=1}^{\alpha_{2k}}\partial^n_{\Gamma_{1/\kappa},z}w_{2kl}(t,0)\in\Oo^{K}\big(\bigcup_{k\in\ZZ}\hat{S}_{(d+2k\pi)/\kappa}\big).$$ 

Let us write this system in the vector notation
\begin{gather*}
 \left\{
  \begin{array}{l}
  \mathbf{\Lambda_{11}}(\partial_{\Gamma_{1/\kappa},z})\mathbf{\varphi_{11}}(z)+\mathbf{\Lambda_{12}}(\partial_{\Gamma_{1/\kappa},z})\mathbf{\varphi_{12}}(z)+\mathbf{\Lambda_{2}}(\partial_{\Gamma_{1/\kappa},z})\mathbf{\varphi_{2}}(z)
  =\mathbf{\Phi}(z)\\
  \mathbf{\tilde{\Lambda}_{11}}(\partial_{\Gamma_{1/\kappa},t})\mathbf{\psi_{11}}(t)
  +\mathbf{\tilde{\Lambda}_{12}}(\partial_{\Gamma_{1/\kappa},t})\mathbf{\psi_{12}}(t)=\mathbf{\tilde{\Psi}}(t),
  \end{array}
  \right.
\end{gather*}
where
$$\mathbf{\Phi}(z):=\big(\Phi_0(z),...,\Phi_{J-1}(z)\big)^T\in\Big(\Oo^{K}\big(\bigcup_{k\in\ZZ}\hat{S}_{(d+\eta+2k\pi/p)/\kappa}\big)\Big)^{J},$$
$$\mathbf{\tilde{\Psi}}(t):=
\big(\tilde{\Psi}_0(t),...,\tilde{\Psi}_{M-J-1}(t)\big)^T\in\Big(\Oo^{K}\big(\bigcup_{k\in\ZZ}\hat{S}_{(d+2n\pi)/\kappa}\big)\Big)^{M-J}.$$
We also denote
$$
\mathbf{\varphi_{11}}(z):=\big(\varphi_{111}(z),...,\varphi_{11\alpha_{11}}(z),
...,\varphi_{1\tilde{m}_11}(z),...,\varphi_{1\tilde{m}_1\alpha_{1\tilde{m}_1}}(z)\big)^T\in\big(\Oo(D)\big)^{M-J},
$$
$$
\mathbf{\varphi_{12}}(z):=\big(\varphi_{1,\tilde{m}_1+1,1}(z),...,\varphi_{1,\tilde{m}_1+1,\alpha_{1,\tilde{m}_1+1}}(z),...,\varphi_{1m_11}(z),...,\varphi_{1m_1\alpha_{1m_1}}(z)\big)^T\in\big(\Oo(D)\big)^{J-M_2},
$$
$$
\mathbf{\varphi_{2}}(z):=\big(\varphi_{211}(z),...,\varphi_{21\alpha_{21}}(z),
...,\varphi_{2m_21}(z),...,\varphi_{2m_2\alpha_{2m_2}}(z)\big)^T\in\big(\Oo_{1/\kappa}(D)\big)^{M_2},
$$
$\mathbf{\Lambda_1}(\zeta):=\big(a_{ij}(\zeta)\big)^{i=1,...,J}_{j=1,...,M_1}$,
where
$$
a_{ij}(\zeta):=\left\{
  \begin{array}{ll}
   \frac{(i-1)!}{(i-l)!}\lambda_{1m_0}(\zeta)^{i-l} & i\geq l\\
   0 & i<l.
  \end{array}
  \right.
$$
with $l$ and $m_0$ generated uniquely by $j$ in a such way that
\begin{equation}
\label{eq:l_and_m_0}
\sum_{k=1}^{m_0-1}\alpha_{1k}<j\le\sum_{k=1}^{m_0}\alpha_{1k}\quad\textrm{and}\quad
l:=j-\sum_{k=1}^{m_0-1}\alpha_{1k};
\end{equation}
$\mathbf{\Lambda_1}(\zeta):=\big(\mathbf{\Lambda_{11}}(\zeta),\mathbf{\Lambda_{12}}(\zeta)\big)$ and $\mathbf{\Lambda_{11}}(\zeta):=\big(a_{ij}(\zeta)\big)^{i=1,...,J}_{j=1,...,M-J}$,
$\mathbf{\Lambda_{12}}(\zeta):=\big(a_{ij}(\zeta)\big)^{i=1,...,J}_{j=M-J+1,...,M_1}$.

Similarly 
$\mathbf{\Lambda_2}(\zeta):=\big(b_{ij}(\zeta)\big)^{i=1,...,J}_{j=1,...,M_2}$,
where
$$
b_{ij}(\zeta):=\left\{
  \begin{array}{ll}
   \frac{(i-1)!}{(i-l)!}\lambda_{2m_0}(\zeta)^{i-l} & i\geq l\\
   0 & i<l.
  \end{array}
  \right.
$$
with $l$ and $m_0$ defined uniquely by $j$
as in (\ref{eq:l_and_m_0}), but with $\alpha_{1k}$ replaced by $\alpha_{2k}$.

Analogously,
$$
\mathbf{\psi_{11}}(t):=\big(\psi_{111}(t),...,\psi_{11\alpha_{11}}(t),
...,\psi_{1\tilde{m}_11}(t),...,\psi_{1\tilde{m}_1\alpha_{1\tilde{m}_1}}(t)\big)^T\in\big(\Oo(D)\big)^{M-J},
$$
$$
\mathbf{\psi_{12}}(t):=\big(\psi_{1,\tilde{m}_1+1,1}(t),...,\psi_{1,\tilde{m}_1+1,\alpha_{1,\tilde{m}_1+1}}(t),...,\psi_{1m_11}(t),...,\psi_{1m_1\alpha_{1m_1}}(t)\big)^T\in\big(\Oo(D)\big)^{J-M_2},
$$
$\mathbf{\tilde{\Lambda}_1}(\zeta):=\big(\tilde{a}_{ij}(\zeta)\big)^{i=1,...,J}_{j=1,...,M_1}$,
where
$$
\tilde{a}_{ij}(\zeta):=\left\{
  \begin{array}{ll}
   \frac{(i-1)!}{(i-l)!}\lambda^{-1}_{1m_0}(\zeta)^{i-l} & i\geq l\\
   0 & i<l.
  \end{array}
  \right.
$$
with $l=l(j)$ and $m_0=m_0(j)$ defined by (\ref{eq:l_and_m_0}),
$\mathbf{\tilde{\Lambda}_1}(\zeta):=\big(\mathbf{\tilde{\Lambda}_{11}}(\zeta),\mathbf{\tilde{\Lambda}_{12}}(\zeta)\big)$ and\\
$\mathbf{\tilde{\Lambda}_{11}}(\zeta):=\big(\tilde{a}_{ij}(\zeta)\big)^{i=1,...,J}_{j=1,...,M-J}$,
$\mathbf{\tilde{\Lambda}_{12}}(\zeta):=\big(\tilde{a}_{ij}(\zeta)\big)^{i=1,...,J}_{j=M-J+1,...,M_1}$.

Since $\big(\mathbf{\Lambda_{12}}(\zeta),\mathbf{\Lambda_2}(\zeta)\big)$
and $\mathbf{\tilde{\Lambda}_{11}}(\zeta)$ are confluent Vandermonde matrices,
they are invertible for sufficiently large $|\zeta|$, say $|\zeta|>r_0$. Hence we conclude that
\begin{gather}
\label{eq:system_solved}
 \left\{
  \begin{array}{l}
   \begin{pmatrix}
    \mathbf{\varphi_{12}}(z) \\
    \mathbf{\varphi_2}(z)
   \end{pmatrix}
   = \Big(\mathbf{\Lambda_{12}}(\partial_{\Gamma_{1/\kappa},z}),\mathbf{\Lambda_2}(\partial_{\Gamma_{1/\kappa},z})\Big)^{-1}\Big[\mathbf{\Phi}(z)-\mathbf{\Lambda}_{11}(\partial_{\Gamma_{1/\kappa},z})\mathbf{\varphi_{11}}(z)\Big]\\
   \mathbf{\psi_{11}}(t)=\mathbf{\tilde{\Lambda}_{11}}(\partial_{\Gamma_{1/\kappa},t})^{-1}\Big[\mathbf{\tilde{\Psi}}(t)-\mathbf{\tilde{\Lambda}_{12}}(\partial_{\Gamma_{1/\kappa},t})\mathbf{\psi_{12}}(t)\Big]
  \end{array}
  \right.
\end{gather}

Without loss of generality we may assume that there exists $R>0$
such that 
$$\mathbf{\varphi_1}(z)=(\mathbf{\varphi_{11}}(z),\mathbf{\varphi_{12}}(z))\in(\Oo(D_R))^{M_1}.$$
Since $|\lambda_{1,\tilde{m}_1+1}|\geq|\lambda_{1,k}|$ for $k=\tilde{m}_1+1,...,m_1$,
by Lemma \ref{le:crucial} we conclude that\\
$\mathbf{\psi_{12}}(t)\in(\Oo(D_{R/|\lambda_{1,\tilde{m}_1+1}|}))^{J-M_2}$. By the second equation in (\ref{eq:system_solved}) also $\mathbf{\psi_{11}}(t)\in(\Oo(\hat{S}_{(d+\eta+2n\pi/p)/\kappa}\cap D_{R/|\lambda_{1,\tilde{m}_1+1}|}))^{M-J}$ for $n\in\ZZ$.

Since $|\lambda_{1,\tilde{m}_1}|\leq|\lambda_{1,k}|$ for $k=1,...,\tilde{m}_1$,
applying once again Lemma \ref{le:crucial} we get that
$$
\mathbf{\varphi_{11}}(z)\in
(\Oo(\hat{S}_{(d+\eta+2n\pi/p)/\kappa}\cap D_{R|\lambda_{1,\tilde{m}_1}|/|\lambda_{1,\tilde{m}_1+1}|}))^{M-J}\quad\textrm{for}\quad n\in\ZZ.
$$
Moreover, by the first equation in (\ref{eq:system_solved}) also
$$
\mathbf{\varphi_{12}}(z)\in
(\Oo(\hat{S}_{(d+\eta+2n\pi/p)/\kappa}\cap D_{R|\lambda_{1,\tilde{m}_1}|/|\lambda_{1,\tilde{m}_1+1}|}))^{J-M_2}\quad\textrm{for}\quad n\in\ZZ.
$$
If we repeat this procedure $N$ times we get 
$$
\mathbf{\varphi_{1}}(z)\in
(\Oo(\hat{S}_{d+\eta+2n\pi/p}\cap D_{R(|\lambda_{1,\tilde{m}_1}|/|\lambda_{1,\tilde{m}_1+1}|)^N})^{M_1}\quad\textrm{for}\quad n\in\ZZ.
$$
Since $|\lambda_{1,\tilde{m}_1}|>|\lambda_{1,\tilde{m}_1+1}|$
taking $N\to\infty$
we conclude that $\mathbf{\varphi_1}\in(\Oo(\hat{S}_{(d+\eta+2n\pi/p)/\kappa}))^{M_1}$ for $n\in\ZZ$.

\medskip\par
To prove the exponential growth, for given union of subsectors $S'\prec \bigcup_{n\in\ZZ}\hat{S}_{(d+\eta+2n\pi/p)/\kappa}$ 
and $S''\prec\bigcup_{n\in\ZZ}\hat{S}_{(d+2n\pi)\kappa}$, fix $A,B<\infty$ such that
$$
\|\mathbf{\Phi}(z)\|\leq Ae^{B|z|^K}\quad\textrm{for}\quad z\in S',
$$
$$
\|\mathbf{\varphi_{11}}(z)\|\leq Ae^{B|z|^K}
\quad\textrm{for}\quad z\in S'\cap D_R
$$
and
$$
\|\mathbf{\tilde{\Psi}}(t)\|\leq Ae^{B|\lambda_{1,\tilde{m}_1+1}|^K|t|^K}\quad\textrm{for}\quad t\in S'',
$$
where the norm $\|\cdot\|$ is defined as
$$
\|f(z)\|=\max\{|f_1(z)|,\dots,|f_n(z)|\}\quad\textrm{for}\quad f(z)=(f_1(z),\dots, f_n(z)).
$$
By the first equation in (\ref{eq:system_solved}) for every
$\varepsilon>0$
there exists
$C_1<\infty$ such that
$$
\|\mathbf{\varphi_{12}}(z)\|\leq C_1Ae^{B(1+\varepsilon)^K|z|^K}
\quad\textrm{for}\quad z\in S'\cap D_R.
$$
Moreover, by Lemma \ref{le:crucial} for given $\varepsilon>0$ there exists $C_2<\infty$
such that 
$$
\|\mathbf{\psi_{12}}(t)\|\leq C_2C_1Ae^{B(1+\varepsilon)^{2K}|\lambda_{1,\tilde{m}_1+1}|^K|t|^K}
\quad\textrm{for}\quad t\in S''\cap D_{R/|\lambda_{1,\tilde{m}_1+1}|}.
$$
Next, by the second equation in (\ref{eq:system_solved}) for given $\varepsilon>0$ there exists $C_3<\infty$
such that 
$$
\|\mathbf{\psi_{11}}(t)\|\leq C_3C_2C_1Ae^{B(1+\varepsilon)^{3K}|\lambda_{1,\tilde{m}_1+1}|^K|t|^K}
\quad\textrm{for}\quad t\in S''\cap D_{R/|\lambda_{1,\tilde{m}_1+1}|}.
$$
Applying once again Lemma \ref{le:crucial} we see that
for given $\varepsilon>0$ there exists $C_4<\infty$ satisfying
$$
\|\mathbf{\varphi_{11}}(z)\|\leq C_4C_3C_2C_1Ae^{B(1+\varepsilon)^{4K}\big|\frac{\lambda_{1,\tilde{m}_1+1}}{\lambda_{1,\tilde{m}_1}}\big|^K |z|^K}
\quad\textrm{for}\quad z\in S'\cap D_{R\big|\frac{\lambda_{1,\tilde{m}_1+1}}{\lambda_{1,\tilde{m}_1}}\big|}.
$$
Since $|\lambda_{1,\tilde{m}_1}|>|\lambda_{1,\tilde{m}_1+1}|$ we may choose such sufficiently small $\varepsilon>0$ that
$$
\frac{(1+\varepsilon)^4|\lambda_{1,\tilde{m}_1+1}|}{|\lambda_{1,\tilde{m}_1}|} \leq 1.
$$
Putting $\tilde{C}:=C_4C_3C_2C_1$ for such $\varepsilon>0$ and repeating our considerations $N$ times we get the estimation
\begin{equation}
\label{eq:estim}
\|\mathbf{\varphi_{11}}(z)\|\leq \tilde{C}^NAe^{B|z|^K}
\quad\textrm{for}\quad z\in S'\cap D_{R\big|\frac{\lambda_{1,\tilde{m}_1+1}}{\lambda_{1,\tilde{m}_1}}\big|^N}.
\end{equation}
Hence, if we take $p\in\NN$ such that
$\tilde{C}\leq\Big|\frac{\lambda_{1,\tilde{m}_1+1}}{\lambda_{1,\tilde{m}_1}}\Big|^p$,
we deduce by (\ref{eq:estim}) that there exists $\tilde{A}<\infty$ such that
$$\|\mathbf{\varphi_{11}}(z)\|\leq \tilde{A}(|z|^p+1)e^{B|z|^K}\quad\textrm{for}\quad z\in S'.$$
So, by (\ref{eq:system_solved}) we see that $\mathbf{\varphi}(z)=(\mathbf{\varphi_1}(z),\mathbf{\varphi_2}(z))\in\Big(\Oo^{K}\big(\bigcup_{n\in\ZZ}\hat{S}_{(d+\eta+2n\pi/p)/\kappa}\big)\Big)^M$.

Finally, by \cite[Theorem 2]{Mic8} we conclude that $w(t,z)\in\Oo^{K}(\hat{S}_{(d+2l\pi)/\kappa}\times\hat{S}_{(d+\eta+2n\pi/p)/\kappa})$
for $l,n\in\ZZ$.
\end{proof}
\bigskip\par
\section{Analytic continuation of solutions}
We apply Theorem \ref{th:2} to prove the result about analytic continuation of the solution of the Goursat problem.
To this end we assume that the Newton polygon of
the operator $P(\partial_{t},\partial_{z})$ is equal to $Q(M,-M)$,
i.e. has exactly one horizontal and one vertical side  with a  vertex at the point $(M,-M)$, where $M$ is an order of the operator $P(\partial_t,\partial_z)$ with respect to $\partial_t$. In other words this operator has characterisation
$$
P(\partial_t,\partial_z)=P_{(1)}(\partial_t,\partial_z)
P_{(2)}(\partial_t,\partial_z),$$
where 
$$
P_{(1)}(\partial_t,\partial_z)=\prod_{k=1}^{M_1}(\partial_t-\lambda_{k}(\partial_z))\quad\textrm{with}\quad \lambda_{k}(\zeta)\sim \lambda_{k}\zeta
$$
for some $M_1\in\NN$, and
$$
P_{(2)}(\partial_t,\partial_z)=\prod_{k=1}^{M_2}(\partial_t-\tilde{\lambda}_{k}(\partial_z))\quad\textrm{with}\quad \tilde{\lambda}_{k}(\zeta)\sim \tilde{\lambda}_{k}\zeta^{q_k},\quad q_k\leq q
$$
for some $M_2\in\NN_0$ and $q<1$.
Here $M=M_1+M_2$.

Additionally, without loss of generality we may assume that 
$|\lambda_1|\geq|\lambda_2|\geq...\geq|\lambda_{M_1}|.$

\begin{Th}
 \label{th:3}
  We assume that $K\geq \frac{1}{1-q}$, $(j,\alpha)\in{\sf conv\,}\{\mathring{N}_0\}$, $|\lambda_{m-j}|>|\lambda_{m-j+1}|$  and $N$-th finite section Toeplitz matrix $T_{f}(N)$ with symbol $f(z):=z^jf_0(z)$ is invertible for any $N\in\NN_0$.
  We also assume $d,\eta\in\RR$, $p\in\NN$ and  $\arg\lambda_{l}\in\{\eta+\frac{2n\pi}{p}\colon n\in\ZZ\}$ for $l=1,\dots,m_1$.
  \par
Then there exists the unique solution $w(t,z)\in\Oo(D^2)$ of the Goursat problem
\begin{equation}
  \label{eq:goursat_problem_0}
  \left\{
   \begin{array}{l}
    P(\partial_{t},\partial_{z})w(t,z)=f(t,z)\in\Oo(D^2)\\
     w(t,z)-v(t,z)=O(t^j z^{\alpha}),\ \ v(t,z)\in\Oo(D^2),
   \end{array}
  \right.
\end{equation}
which belongs to the space
$\Oo^{K,K}(\hat{S}_d\times\hat{S}_{d+\eta+2n\pi/p})$ for $n\in\ZZ$
if and only if the Goursat data $v(t,z)$ and the inhomogeneity $f(t,z)$ belong also to the same space.
 \end{Th}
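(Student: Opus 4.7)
The plan is to treat the inhomogeneity using Lemma \ref{le:inhomogeneous} and then reduce to a homogeneous Goursat problem, which is handled by Lemma \ref{le:crucial_3} after a change of variables. Throughout I work with $a=1$ and $b=q$ in the notation of Section 7, so that $s=a-1=0$. The hypothesis $K \geq 1/(1-q) = 1/(a-b)$ is exactly the growth bound required by Lemma \ref{le:inhomogeneous}, and after rescaling by $\kappa$ it becomes $\kappa K \geq \kappa/(1-q) = a\kappa/(a-b)$, which is the bound needed in Lemma \ref{le:crucial_3}.

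\emph{Reduction to the homogeneous case.} Apply Lemma \ref{le:inhomogeneous} to obtain a unique $w_0(t,z) \in \Oo(D^2)$ satisfying $P(\partial_t,\partial_z)w_0 = f$ with $\partial_t^l w_0(0,z) = 0$ for $l=0,\dots,M-1$; the lemma further guarantees $w_0 \in \Oo^{K,K}(\hat{S}_d \times \hat{S}_{d+\eta+2n\pi/p})$ whenever $f$ is in this space (and conversely, $f = P w_0$ inherits the space from $w_0$ since each $\partial_t^j\partial_z^{\alpha}$ preserves it up to shrinking sectors). Setting $\tilde{w} := w - w_0$ and $\tilde{v} := v - w_0 \in \Oo(D^2)$ transforms \eqref{eq:goursat_problem_0} into the homogeneous Goursat problem $P\tilde{w}=0$, $\tilde{w}-\tilde{v}=O(t^j z^\alpha)$, which is uniquely solvable in $\Oo(D^2)$ by Theorem \ref{th:2}(1) at $s=0$: the hypotheses $|\lambda_{M-j}|>|\lambda_{M-j+1}|$ and invertibility of all $T_f(N)$ are exactly the Fredholm-plus-bijectivity conditions. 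It remains to prove the iff statement with $\tilde{w}$ in place of $w$ and $\tilde{v}$ in place of $v$.

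\emph{Change of variables and application of Lemma \ref{le:crucial_3}.} Pick $\kappa\in\NN$ large enough that all symbols $\lambda_k(\zeta)$ and $\tilde{\lambda}_k(\zeta)$ are holomorphic in $\xi=\zeta^{1/\kappa}$ near infinity, and set $W(t,z):=\tilde{w}(t^\kappa, z^\kappa)$, $V(t,z):=\tilde{v}(t^\kappa, z^\kappa)$. Using the identities $(\partial_t\tilde{w})(t^\kappa, z^\kappa)=\partial_{\Gamma_{1/\kappa},t}^\kappa W(t,z)$ and the analogous one in $z$ (as in the proof of Lemma \ref{le:crucial_2}), the pair $(W,V)$ satisfies the Goursat problem \eqref{eq:Goursat_Q2} for $Q=P(\partial_{\Gamma_{1/\kappa},t}^\kappa,\partial_{\Gamma_{1/\kappa},z}^\kappa)$. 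In the factorisation of $Q$ provided by Lemma \ref{le:crucial_3}, the slope-one roots are the $e^{2\pi i m/\kappa}\lambda_k^{1/\kappa}$ whose arguments lie in $\{(\eta+2n\pi/p)/\kappa : n\in\ZZ\}$ by the hypothesis on $\arg\lambda_k$, so the compatibility condition of the lemma holds. Applying Lemma \ref{le:crucial_3} with exponent $\kappa K$ yields $W \in \Oo^{\kappa K,\kappa K}(\hat{S}_{(d+2l\pi)/\kappa}\times\hat{S}_{(d+\eta+2n\pi/p)/\kappa})$ for all $l,n\in\ZZ$ if and only if $V$ does, and pulling back via $(t,z)\mapsto(t^\kappa, z^\kappa)$ converts this to $\tilde{w}\in\Oo^{K,K}(\hat{S}_d\times\hat{S}_{d+\eta+2n\pi/p})$ for $n\in\ZZ$ iff $\tilde{v}$ is. Combined with the first step, this proves the theorem.

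\emph{Main obstacle.} The delicate point is the bookkeeping for the change of variables $t\mapsto t^\kappa$: verifying that the preimage of a sector $\hat{S}_d$ on the Riemann surface of the logarithm decomposes as the union $\bigcup_{l\in\ZZ}\hat{S}_{(d+2l\pi)/\kappa}$, that the growth exponent $K$ rescales to $\kappa K$, and that the discrete set of admissible arguments for the $\lambda_k$ transports correctly to the arguments of their $\kappa$-th roots. Once these identifications are carried out carefully, Lemmas \ref{le:inhomogeneous} and \ref{le:crucial_3} combine to give the theorem without further analytic work.
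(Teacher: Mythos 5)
Your proposal is correct and takes essentially the same route as the paper's proof: reduction to the homogeneous problem via Lemma \ref{le:inhomogeneous} with $a=1$, $b=q$ (the easy ``only if'' direction coming from closure of the space under $\partial_t,\partial_z$ and the traces), then the ramification $t\mapsto t^{\kappa}$, $z\mapsto z^{\kappa}$ as in Lemma \ref{le:crucial_2}, and finally Lemma \ref{le:crucial_3} with exponent $\kappa K$. The only cosmetic differences are that you subtract $w_0$ itself from $v$ where the paper subtracts the truncated boundary traces $\sum_{\beta<\alpha}\partial_z^{\beta}\overline{w}(t,0)z^{\beta}/\beta!$ (both yield admissible Goursat data for the homogeneous problem), and that the paper derives the ``only if'' implication directly from the traces of $w$ rather than through the homogeneous equivalence.
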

\begin{proof}
First, observe that by Theorem \ref{th:2} there exists the unique solution $w(t,z)\in\Oo(D^2)$ of the Goursat problem (\ref{eq:goursat_problem_0}).
\par
We will show the equivalence between the analytic continuation property of the solution $w(t,z)$ and the same property of the Goursat data $v(t,z)$ and the inhomogeneity $f(t,z)$.

 ($\Longrightarrow$)
 Assume that the solution $w(t,z)$ belongs to the space $\Oo^{K,K}(\hat{S}_d\times\hat{S}_{d+\eta+2n\pi/p})$ for $n\in\ZZ$.
 Since this space is closed under derivations $\partial_t$ and $\partial_z$,
 we conclude that also $f(t,z)=P(\partial_{t},\partial_{z})w(t,z)$
belongs to the same space. Moreover
$\varphi_i(z):=\partial_t^i w(0,z)\in\Oo^{K}(\hat{S}_{d+\eta+2n\pi/p})$ for $n\in\ZZ$, $i=0,...,j-1$, and
$\psi_{\beta}(t):=\partial_z^\beta w(t,0)\in\Oo^{K}(\hat{S}_d)$ for $\beta=0,...,\alpha-1$. Hence also $v(t,z)$, given by (\ref{eq:any_v}) with $r(t,z)=0$, belongs to the same space.
\bigskip\par
($\Longleftarrow$)
\textit{Step 1. Reduction to the homogeneous equation.}
First, we consider the Cauchy problem
\begin{equation*}
  \left\{
   \begin{array}{l}
    P(\partial_{t},\partial_{z})\overline{w}(t,z)=f(t,z)\in\Oo(D^2)\\
     \partial_t^l\overline{w}(0,z)=0\quad\textrm{for}\quad l=0,...,M-1.
   \end{array}
  \right.
\end{equation*}
Since $f(t,z)\in\Oo^{K,K}(\hat{S}_d\times\hat{S}_{d+\eta+2n\pi/p})$ for $n\in\ZZ$ and $K\geq\frac{1}{1-q_k}$ for $k=1,...,M_2$, applying Lemma \ref{le:inhomogeneous} with $a=1$ and $b=q$ we conclude that $\overline{w}(t,z)\in\Oo^{K,K}(\hat{S}_d\times\hat{S}_{d+\eta+2n\pi/p})$ for $n\in\ZZ$.

Let $\tilde{w}(t,z):=w(t,z)-\overline{w}(t,z)$. By the linearity of (\ref{eq:goursat_problem_0}), we get that $\tilde{w}(t,z)$ is a solution of the homogeneous Goursat problem
\begin{equation*}
  \left\{
   \begin{array}{l}
    P(\partial_{t},\partial_{z})\tilde{w}(t,z)=0\\
     \tilde{w}(t,z)-\tilde{v}(t,z)=O(t^j z^{\alpha}),
   \end{array}
  \right.
\end{equation*}
where
$$\tilde{v}(t,z):=v(t,z)-\sum_{\beta=0}^{\alpha-1}\partial_z^{\beta}\overline{w}(t,0)\frac{z^{\beta}}{\beta!}$$
belongs to the space $\Oo^{K,K}(\hat{S}_d\times\hat{S}_{d+\eta+2n\pi/p})$, $n\in\ZZ$.

It means that it is enough to prove our assertion in the case when $f(t,z)\equiv 0$.
\medskip\par
\textit{Step 2. Application of Lemma \ref{le:crucial_2}.}
Let $\kappa\in\NN$ and $r_0\geq 0$ be such that
all algebraic functions $\lambda_{k}(\zeta)$ and $\tilde{\lambda}_k(\zeta)$ are
holomorphic in the variable $\zeta^{1/\kappa}$ for $|\zeta|\geq r_0$.

Applying Lemma \ref{le:crucial_2} with $a=1$ and $b=q$ we conclude that $\breve{w}(t,z):=w(t^{\kappa},z^{\kappa})$
satisfies the Goursat problem
\begin{equation*}
  \left\{
   \begin{array}{l}
    Q(\partial_{\Gamma_{1/\kappa},t},\partial_{\Gamma_{1/\kappa},z})\breve{w}(t,z)=0\\
     \breve{w}(t,z)-v(t^{\kappa},z^{\kappa})=O(t^{\kappa j} z^{\kappa\alpha}),
   \end{array}
  \right.
\end{equation*}
with $Q(\partial_{\Gamma_{1/\kappa},t},
\partial_{\Gamma_{1/\kappa},z}):=P(\partial^{\kappa}_{\Gamma_{1/\kappa},t},
\partial^{\kappa}_{\Gamma_{1/\kappa},z})$.
\medskip\par
\textit{Step 3. Application of Lemma \ref{le:crucial_3}.}
Observe that function $(t,z)\mapsto v(t^{\kappa},z^{\kappa})$ belongs to the space
$\Oo^{\kappa K,\kappa K}(\hat{S}_{(d+2l\pi)/\kappa}\times\hat{S}_{(d+\eta+2n\pi/p)/\kappa})$ for $l,n\in\ZZ$. Hence, applying Lemma \ref{le:crucial_3} with $a=1$ and $b=q$ we conclude that $\breve{w}(t,z)$ is contained at the same space. It means that $w(t,z)\in\Oo^{K,K}(\hat{S}_d\times\hat{S}_{d+\eta+2n\pi/p})$ for $n\in\ZZ$, so we get the assertion.
\end{proof}
\bigskip\par
 
The main idea and the difficult part of the above proof is hidden in Lemma \ref{le:crucial_3}. To show this idea in an easier way we calculate directly the simple example, which also holds by Theorem \ref{th:3}.
\begin{Ex}
Consider the Goursat problem
 \begin{equation}
  \label{eq:goursat_example}
  \left\{
   \begin{array}{l}
    (\partial_t-\lambda_1\partial_z)(\partial_t-\lambda_2\partial_z)w(t,z)=0\\
    w(0,z)=\varphi(z)\in\Oo(D)\\
    w(t,0)=\psi(t)\in\Oo(D).
   \end{array}
  \right.
 \end{equation}
Then the following conditions hold.
\begin{itemize}
 \item If $|\lambda_1|\neq|\lambda_2|$ then there exists a unique solution $w(t,z)\in\Oo(D^2)$ of (\ref{eq:goursat_example}). Moreover, this solution is given by the formula
 \begin{equation}
 \label{eq:formula_fg}
 w(t,z)=f(\lambda_1t+z)+g(\lambda_2 t+z)\quad\textrm{for some}\quad f(z),g(z)\in\Oo(D).
 \end{equation}
 \item Assume additionally that $\arg\lambda_1=\arg\lambda_2=\eta$ and $K>0$. Then the following conditions are equivalent
 \begin{enumerate}
  \item $\varphi(z)\in\Oo^{K}(\hat{S}_{d+\eta})$ and $\psi(t)\in\Oo^K(\hat{S}_d)$,
  \item $f(z),g(z)\in\Oo^K(\hat{S}_{d+\eta})$,
  \item $w(t,z)\in\Oo^K(\hat{S}_d\times D)$.
 \end{enumerate}
\end{itemize}
\end{Ex}
\begin{proof}
The first statement is given directly by Theorem \ref{th:2} and by the observation that the general solution $u(t,z)\in\Oo(D^2)$ of the equation
$(\partial_t-\lambda\partial_z)u(t,z)=0$ is given by $u(t,z)=F(\lambda t +z)$ for some $F\in\Oo(D)$.

To prove the second statement,
observe that the implications (2) $\Rightarrow$ (3) and (2) $\Rightarrow$ (1) are given immediately by the formula (\ref{eq:formula_fg}).
To show the implication (3) $\Rightarrow$ (2), note that by (\ref{eq:formula_fg}) we get 
\begin{equation*}
  \left\{
   \begin{array}{l}
    \partial_tw(t,0)=\lambda_1f'(\lambda_1 t)+\lambda_2g'(\lambda_2t)=:F_1(t)\in\Oo^{K}(\hat{S}_{d})\\
    \partial_zw(t,0)=f'(\lambda_1t)+g'(\lambda_2 t)=:F_2(t)\in\Oo^{K}(\hat{S}_{d}).
   \end{array}
   \right.
 \end{equation*}
Hence $f'(\lambda_1t)=\frac{1}{\lambda_1-\lambda_2}(F_1(t)-\lambda_2 F_2(t))\in\Oo^K(\hat{S}_{d})$ and also $g'(\lambda_2t)=\frac{1}{\lambda_2-\lambda_1}(F_1(t)-\lambda_1 F_2(t))\in\Oo^K(\hat{S}_{d})$. It means that $f(z),g(z)\in\Oo^K(\hat{S}_{d+\eta})$.

So, it is sufficient to show the implication (1) $\Rightarrow$ (2).
  Since $w(t,z)$ given by (\ref{eq:formula_fg}) satisfies the conditions from (\ref{eq:goursat_example}) we get
   \begin{equation*}
  \left\{
   \begin{array}{l}
    w(0,z)=f(z)+g(z)=\varphi(z)\in\Oo^{K}(\hat{S}_{d+\eta})\\
    w(t,0)=f(\lambda_1t)+g(\lambda_2 t)=\psi(t)\in\Oo^{K}(\hat{S}_{d}).
   \end{array}
  \right.
 \end{equation*}
 Without loss of generality we may assume that $|\lambda_1|<|\lambda_2|$. Then for $\tau:=\lambda_1/\lambda_2\in(0,1)$ and $z=\lambda_2 t$ we get
 $$
 f(z)-f(\tau z)=\varphi(z)-\psi(z/\lambda_2)=:F(z)\in\Oo^K(\hat{S}_{d+\eta}).
 $$
 Applying $N$-times this equality we conclude that
 $$
 f(z)=\sum_{n=0}^{N-1}F(\tau^n z)+f(\tau^N z).
 $$

 We know that there exists $R>0$ such that $f(z)\in\Oo(D_R)$. For fixed $z\in\CC$ we take $N\in\NN$ such that $\tau^Nz\in D_R$ and
 $\tau^N|z|<R\leq \tau^{N-1}|z|$. Hence $f(z)\in\Oo(\hat{S}_{d+\eta})$.
 \bigskip\par
  To estimate $|f(z)|$ observe that we may take $A,B,C<\infty$ such that $|F(z)|\leq Ae^{B|z|^K}$ and $|f(z)|\leq C$ for $z\in D_R$. Additionally we may assume that
  $B>1/R^K$.
 \bigskip\par
 We have
 \begin{equation*}
  |f(z)|\leq\sum_{n=0}^{N-1}|F(\tau^nz)|+|f(\tau^N z)|\leq
  \sum_{n=0}^{N-1}Ae^{B\tau^{nK}|z|^K}+C.
 \end{equation*} 

 Since $BR^K>1$, we may use the inequality
  \begin{equation*}
 e^{a_1}+\dots+e^{a_n}\leq e^{a_1+\dots+a_n}\quad\textsf{for}\quad
 a_1,\dots,a_n\geq 1
 \end{equation*}
 to conclude that
 \begin{equation*}
  |f(z)|
  \leq Ae^{\sum_{n=0}^{N-1}B\tau^{nK}|z|^K}+C
  \leq (C+A)e^{\frac{B}{1-\tau^K}|z|^K}.
  \end{equation*}
 
It means that $f(z)\in\Oo^K(\hat{S}_{d+\eta})$.
Analogously we show
that also $g(z)\in\Oo^K(\hat{S}_{d+\eta})$. 
\end{proof}
\bigskip\par
\section{Summable solutions}
In this section we prove the main result of the paper, where
we find the conditions for summable solutions of the Goursat problem in terms of
the analytic continuation property of the Borel transform
of the inhomogeneity and the Goursat data.

Here we assume that the Newton polygon $N(P)$ of
the operator $P(\partial_{t},\partial_{z})$ has exactly one side with a positive slope, which is equal to $K>0$, $K=\frac{1}{s}$, and that the point $(M,-M)$ is one of the vertex of $N(P)$, where $M$ is an order of the operator $P(\partial_t,\partial_z)$ with respect to $\partial_t$.
It means that we may write the operator $P$ as
$$
P(\partial_t,\partial_z)=P_{(1)}(\partial_t,\partial_z)
P_{(2)}(\partial_t,\partial_z),$$
where 
$$
P_{(1)}(\partial_t,\partial_z)=\prod_{k=1}^{M_1}(\partial_t-\lambda_{k}(\partial_z))\quad\textrm{with}\quad \lambda_{k}(\zeta)\sim \lambda_{k}\zeta^{q},\quad q=1+\frac{1}{K}>1,
$$
and
$$
P_{(2)}(\partial_t,\partial_z)=\prod_{k=1}^{M_2}(\partial_t-\tilde{\lambda}_{k}(\partial_z))^{\beta_k}\quad\textrm{with}\quad \tilde{\lambda}_{k}(\zeta)\sim \tilde{\lambda}_{k}\zeta^{q_k},\quad q_k\leq 1
$$
with $M_1+M_2=M$.

Additionally, without loss of generality we may assume that 
$|\lambda_1|\geq|\lambda_2|\geq...\geq|\lambda_{M_1}|.$
\par\bigskip
Now, we are ready state the main result of the paper
  \begin{Th}[Main theorem]
  \label{th:4}
  We assume that $(j,\alpha)\in{\sf conv\,}\{\mathring{N}_s\}$,
  $|\lambda_{M_1-j}|>|\lambda_{M_1-j+1}|$ and $N$-th finite section Toeplitz matrix $T_{f}(N)$ with symbol $f(z):=z^jf_s(z)$ is invertible for any $N\in\NN_0$.
  We also assume that $\eta\in\RR$, $p\in\NN$ and 
$\arg\lambda_{l}\in \{\eta+\frac{2n\pi}{p}\colon n\in\ZZ\}$ for $l=1,\dots,M_1$.
\par
Then there exists a unique solution $u(t,z)\in\Oo[[t]]_s$ of the Goursat problem
\begin{equation}
  \label{eq:goursat_problem}
  \left\{
   \begin{array}{l}
    P(\partial_{t},\partial_{z})u(t,z)=f(t,z)\in\Oo[[t]]_s\\
     u(t,z)-v(t,z)=O(t^j z^{\alpha}),\ \ v(t,z)\in\Oo[[t]]_s.
   \end{array}
  \right.
\end{equation}

Moreover $\Bo_s u(t,z)\in\Oo^{K,qK}(\hat{S}_d\times\hat{S}_{(d+\eta+2n\pi/p)/q})$  for $n\in\ZZ$ if and only if $\Bo_s f(t,z)$ and $\Bo_s v(t,z)$ belong to the same space.

In particular, if the inhomogeneity $f(t,z)$ and 
the Goursat data $v(t,z)$ satisfy condition
$$\Bo_s f(t,z),\Bo_s v(t,z)\in\Oo^{K,qK}(\hat{S}_d\times\hat{S}_{(d+\eta+2n\pi/p)/q})\quad\textrm{for}\quad n\in\ZZ$$
then $u(t,z)$ is $k$-summable in the direction $d$
 \end{Th}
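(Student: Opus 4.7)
The plan is to apply the modified Borel transform $\mathcal{B}_s$ of order $s=1/K$ and thereby reduce the Goursat problem for $P(\partial_t,\partial_z)$ to a Goursat problem for the moment operator $P(\partial_{\Gamma_q,t},\partial_z)$ acting on genuinely holomorphic functions near the origin. The strategy then mirrors the three-step proof of Theorem \ref{th:3}, but with the parameters $a=q>1$ and $b=\max_k q_k\le 1$ instead of $a=1$. First, unique solvability of (\ref{eq:goursat_problem}) in $\mathcal{O}[[t]]_s$ is given directly by Theorem \ref{th:2}: the hypotheses $|\lambda_{M_1-j}|>|\lambda_{M_1-j+1}|$ and invertibility of every $T_f(N)$ are exactly those needed for $L(\partial_t,\partial_z)$ to be bijective on $G^s_w$ at the Gevrey index $s=q-1$.

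Set $w:=\mathcal{B}_s u$, $g:=\mathcal{B}_s f$, $\tilde v:=\mathcal{B}_s v$. By Remark \ref{re:5} and Remark \ref{re:convergent}, $g,\tilde v\in\mathcal{O}(D^2)$ and $w$ satisfies
\begin{equation*}
\left\{\begin{array}{l} P(\partial_{\Gamma_q,t},\partial_z)w(t,z)=g(t,z),\\ w(t,z)-\tilde v(t,z)=O(t^jz^\alpha). \end{array}\right.
\end{equation*}
The implication ($\Rightarrow$) is immediate: the space $\mathcal{O}^{K,qK}(\hat S_d\times\hat S_{(d+\eta+2n\pi/p)/q})$ is preserved by $\partial_{\Gamma_q,t}$ and by the holomorphic operators $\lambda_k(\partial_z)$, so if $w$ lies in it then so does $g=P(\partial_{\Gamma_q,t},\partial_z)w$, and $\tilde v$ is built from the traces of $w$ via (\ref{eq:any_v}). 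For the converse, imitate the proof of Theorem \ref{th:3}: (i) use Lemma \ref{le:inhomogeneous} with $(a,b)=(q,\max_k q_k)$, noting that $K=1/(q-1)\ge 1/(q-b)$, to produce a particular solution $\bar w\in\mathcal{O}^{K,qK}$ in the same sectors and thereby reduce to the homogeneous equation; (ii) pick $\kappa\in\mathbb{N}$ large enough that all $\lambda_k(\zeta),\tilde\lambda_k(\zeta)$ are holomorphic in $\zeta^{1/\kappa}$ near infinity and $q\kappa\in\mathbb{N}$, and define $\breve w(t,z):=w(t^{q\kappa},z^\kappa)$; by Lemma \ref{le:crucial_2} this solves the Goursat problem for $Q(\partial_{\Gamma_{1/\kappa},t},\partial_{\Gamma_{1/\kappa},z}):=P(\partial^{q\kappa}_{\Gamma_{1/\kappa},t},\partial^\kappa_{\Gamma_{1/\kappa},z})$ with data $\tilde v(t^{q\kappa},z^\kappa)\in\mathcal{O}(D^2)$; (iii) apply Lemma \ref{le:crucial_3} with the rescaled exponent $K':=q\kappa K$, which saturates $K'\ge a\kappa/(a-b)=q\kappa/(q-b)$, since the substitution multiplies both the $t$-growth $K$ and the $z$-growth $qK$ by $q\kappa$ and $\kappa$ respectively (and $qK\cdot\kappa=q\kappa K$). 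The conclusion of Lemma \ref{le:crucial_3} then gives $\breve w\in\mathcal{O}^{K',K'}$ in the appropriate union of sheets, and undoing the substitution yields $w\in\mathcal{O}^{K,qK}(\hat S_d\times\hat S_{(d+\eta+2n\pi/p)/q})$ for $n\in\mathbb{Z}$. The final $k$-summability claim is then the definition, applied to $w=\mathcal{B}_s u\in\mathcal{O}^K(\hat S_d,\mathbb{E})$ with $\mathbb{E}$ the Banach space of holomorphic functions on a small disc in $z$.

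The principal obstacle is the sector bookkeeping in step (iii). Under the substitution $t\mapsto t^{q\kappa}$, $z\mapsto z^\kappa$, the $t$-direction $d$ of the original sector splits into the $q\kappa$ sheets $\{(d+2l\pi)/(q\kappa):l\in\mathbb{Z}\}$, while the inner $z$-direction $(d+\eta+2n\pi/p)/q$ becomes $(d+\eta+2n\pi/p)/(q\kappa)$ after the $\kappa$-th root; these must match the common denominator required by Lemma \ref{le:crucial_3} (which forces one to reinterpret that lemma's $\kappa$ as our $q\kappa$, compatible with the fact that the leading terms $\lambda_{1k}^{1/(q\kappa)}\zeta$ of the factorisation of $Q$ are simple poles, cf.\ Lemma \ref{le:inverse}). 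The hypothesis $\arg\lambda_k\in\{\eta+2n\pi/p:n\in\mathbb{Z}\}$ is tailored precisely to produce the argument condition $\arg\lambda_{1k}^{1/(q\kappa)}\in\{(\eta+2n\pi/p)/(q\kappa):n\in\mathbb{Z}\}$ required by Lemma \ref{le:crucial_3} after rescaling $p$. Once this alignment is verified, the estimates from Lemma \ref{le:crucial_3} close up exactly, and the main theorem follows.
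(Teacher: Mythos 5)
Your proposal is correct and follows essentially the same route as the paper: unique solvability from Theorem \ref{th:2}, the easy implication by closure of the space under $\partial_{\Gamma_q,t}$ and $\partial_z$, and the converse by splitting off the inhomogeneity via Lemma \ref{le:inhomogeneous} with $a=q$, then applying Lemma \ref{le:crucial_2} (substitution $t\mapsto t^{q\kappa}$, $z\mapsto z^{\kappa}$) and Lemma \ref{le:crucial_3} with the rescaled growth index $q\kappa K$, exactly as in Steps 1--6 of the paper's proof. Your sector/sheet bookkeeping and the remark on absorbing the root-of-$\lambda_k$ arguments into the choice of $p$ match the paper's handling via the functions $\lambda_{jkn}(\zeta)$ in Lemma \ref{le:crucial_2}.
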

 \begin{proof}
We divide the proof of the main theorem into the following steps.
\bigskip\par
\textit{Step 1.} By Theorem \ref{th:2} the Goursat problem (\ref{eq:goursat_problem}) has the unique solution $u(t,z)\in\Oo[[t]]_s$.
\bigskip\par
\textit{Step 2.} Assume that the solution $\Bo_su(t,z)$ belongs to the space $\Oo^{K,qK}(\hat{S}_d\times\hat{S}_{(d+\eta+2n\pi/p)/q})$ for $n\in\ZZ$.
 Since this space is closed under derivations $\partial_{\Gamma_q,t}$ and $\partial_z$,
 we conclude that also 
 $$\Bo_sf(t,z)=\Bo_sP(\partial_{t},\partial_{z})u(t,z)=P(\partial_{\Gamma_q,t},\partial_z)\Bo_su(t,z)
 $$
belongs to the same space. Moreover
$\varphi_i(z):=\partial_{\Gamma_q,t}^i w(0,z)\in\Oo^{qK}(\hat{S}_{(d+\eta+2n\pi/p)/q})$ for $n\in\ZZ$, $i=0,...,j-1$, and
$\psi_{\beta}(t):=\partial_z^\beta \Bo_s w(t,0)\in\Oo^{K}(\hat{S}_d)$ for $\beta=0,...,\alpha-1$. Hence also $\Bo_sv(t,z)$ belongs to the same space, where $v(t,z)$ is given by (\ref{eq:any_v}) with $r(t,z)=0$.
\bigskip\par
\textit{Step 3.}
By the linearity, if $u(t,z)$ is a solution of (\ref{eq:goursat_problem}) then $u(t,z)=u_1(t,z)+u_2(t,z)$,
 where $u_1(t,z)$ and $u_2(t,z)$ satisfy
 \begin{equation*}
  \left\{
   \begin{array}{l}
    P(\partial_{t},\partial_{z})u_1(t,z)=f(t,z)\\
     \partial_t^lu_1(0,z)=0\quad\textrm{for}\quad l=0,...,M-1
   \end{array}
  \right.
\end{equation*}
and
\begin{equation*}
  \left\{
   \begin{array}{l}
    P(\partial_{t},\partial_{z})u_2(t,z)=0\\
     u_2(t,z)-v_2(t,z)=o(t^j z^{\alpha})
   \end{array}
  \right.
\end{equation*}
with 
\begin{equation*}
v_2(t,z):=v(t,z)-\sum_{l=0}^{\alpha-1}\frac{z^l}{l!}\partial_z^lu_1(t,0).
\end{equation*}
To finish the proof it is enough to show that
$\Bo_s u_1(t,z),\Bo_s u_2(t,z)\in\Oo^{K,qK}(\hat{S}_d\times\hat{S}_{(d+\eta+2n\pi/p)/q})$ for $n\in\ZZ$.
\bigskip\par
\textit{Step 4.} We will show that
if $\Bo_{s}f(t,z)\in\Oo^{K,qK}(\hat{S}_d\times\hat{S}_{(d+\eta+2n\pi/p)/q})$ for $n\in\ZZ$ then $\Bo_{s}u_1(t,z)$ belongs to the same space.

To this end we denote $w_1(t,z):=\Bo_{s}u_1(t,z)$ and $g(t,z):=\Bo_{s}f(t,z)\in\Oo^{K,qK}(\hat{S}_d\times\hat{S}_{(d+\eta+2n\pi/p)/q})$, $n\in\ZZ$. Observe that $w_1(t,z)$ is a solution of the equation
 \begin{equation*}
  \left\{
   \begin{array}{l}
     P(\partial_{\Gamma_q,t},\partial_z)w_1(t,z)=g(t,z)\\
     \partial_{\Gamma_{q},t}^lw_1(0,z)=0\ l=0,\dots,M-1.
   \end{array}
  \right.
  \end{equation*}
Applying Lemma \ref{le:inhomogeneous} with $a=q$ and $b=1$
we conclude that $\Bo_s u_1(t,z)=w_1(t,z)\in\Oo^{K,qK}(\hat{S}_d\times\hat{S}_{(d+\eta+2n\pi/p)/q})$, $n\in\ZZ$.
\bigskip\par
\textit{Step 5.}
By Lemma \ref{le:crucial_2} with $a=q$ and $b=1$ the function $w_2(t,z):=(\Bo_{s} u_2)(t^{q\kappa},z^{\kappa})$
 satisfies the Goursat problem
 \begin{equation}
  \label{eq:w}
  \left\{
   \begin{array}{l}
    P(\partial^{q\kappa}_{\Gamma_{1/\kappa},t},\partial^{\kappa}_{\Gamma_{1/\kappa},z})w_2(t,z)=0\\
     w_2(t,z)-\tilde{v}_2(t,z)=O(t^{q\kappa j} z^{\kappa\alpha}),
   \end{array}
  \right.
\end{equation}
where $\tilde{v}_2(t,z)=(\Bo_{s} v_2)(t^{q\kappa},z^{\kappa})\in\Oo^{qK\kappa,qK\kappa}(\hat{S}_{(d+2l\pi)/q\kappa}\times\hat{S}_{(d+\eta+2n\pi/p)/q\kappa})$ for $l,n\in\ZZ$.
\bigskip\par
\textit{Step 6.}
By Lemma \ref{le:crucial_3} 
 we conclude that the solution $w_2(t,z)$ of the Goursat problem (\ref{eq:w}) 
 belongs to the space $\Oo^{q\kappa K,q\kappa K}(S_{(d+2l\pi)/q\kappa}\times S_{(d+\eta+2n\pi/p)/q\kappa})$ for $l,n\in\ZZ$.
 Since $w_2(t,z)=(\Bo_{s} u_2)(t^{q\kappa},z^{\kappa})$ we deduce that
 $\Bo_{s} u_2(t,z)$ belongs to the space $\Oo^{K,qK}(\hat{S}_{d}\times \hat{S}_{(d+\eta+2n\pi/p)/q})$ for $n\in\ZZ$.
 \end{proof}
 \bigskip\par
 We will show that under some additional conditions on the operator $P(\partial_t,\partial_z)$, for the homogeneous Goursat problem the necessary condition for the summability given in Theorem \ref{th:4} is also sufficient. It gives us in this case the characterisation of summable solutions of the Goursat problem in terms of the Goursat data.
 \begin{Th}
 \label{th:5}
Under the assumptions of the above theorem, if additionally $P(\partial_t,\partial_z):=P_{(1)}(\partial_t,\partial_z)$, $f(t,z)\equiv 0$ and $p=1$
then $u$ is $K$-summable in a direction $d$
if and only if $\Bo_{s}v(t,z)\in\Oo^{K,qK}(\hat{S}_d\times\hat{S}_{(d+\eta+2n\pi)/q})$ for $n\in\ZZ$.
\end{Th}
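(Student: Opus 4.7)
The ``if'' direction is immediate from Theorem~\ref{th:4}: if $\Bo_s v$ lies in the stated space then (since $f\equiv0$) so does $\Bo_s u$, whose analyticity and exponential growth of order $K$ in $t\in\hat{S}_d$ are by definition $K$-summability of $u$ in direction $d$. For the ``only if'' direction, Theorem~\ref{th:4} again reduces matters to promoting $\Bo_s u\in\Oo^K(\hat{S}_d,\Oo(D))$, which is what $K$-summability of $u$ in direction $d$ literally provides, to the full two-variable property $\Bo_s u\in\Oo^{K,qK}(\hat{S}_d\times\hat{S}_{(d+\eta+2n\pi)/q})$ for every $n\in\ZZ$.

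I would generalise the explicit computation of the Example at the end of the preceding section. Applying the ramification substitution $\breve{w}(t,z):=\Bo_s u(t^{q\kappa},z^\kappa)$ of Lemma~\ref{le:crucial_2} with $a=q$ and $b=1$, the function $\breve{w}$ satisfies the homogeneous equation $Q\breve{w}=0$ in which $Q$ factorises \emph{entirely} into simple-pole-at-infinity factors $\partial_{\Gamma_{1/\kappa},t}-\lambda_{k,n}(\partial_{\Gamma_{1/\kappa},z})$ with $\lambda_{k,n}(\zeta)\sim\lambda_{k,n}\zeta$ (the hypothesis $P_{(2)}\equiv1$ is essential here), with $\arg\lambda_{k,n}\in\{(\eta+2m\pi)/(q\kappa):m\in\ZZ\}$ by virtue of $p=1$. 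The summability assumption translates into $\breve{w}$ being holomorphic in $t$ on $\bigcup_{l\in\ZZ}\hat{S}_{(d+2l\pi)/(q\kappa)}$ with exponential growth of order $qK\kappa$ and holomorphic in $z$ on some disc. I would then decompose $\breve{w}=\sum_{k,n,l}\breve{w}_{k,n,l}$ into single-factor Cauchy solutions via Lemma~\ref{le:crucial_2}, use \cite[Lemma~7]{Mic7} to produce the dual $z$-axis data $\psi_{k,n,l}(t)$, and observe that the $z$-Taylor coefficients $\Psi_n(t):=\partial_{\Gamma_{1/\kappa},z}^n\breve{w}(t,0)$ automatically inherit the sectorial continuation of $\breve{w}$. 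Next, I would invert the confluent-Vandermonde pseudodifferential system (the ``$\tilde{\Lambda}$-equation'' in the proof of Lemma~\ref{le:crucial_3}) to extract each $\psi_{k,n,l}(t)$ with the same continuation, apply Lemma~\ref{le:crucial}(b) to pass it to analytic continuation of $\varphi_{k,n,l}(z)$ on $\hat{S}_{(d+2l\pi)/(q\kappa)+\arg\lambda_{k,n}}$, reassemble $\breve{w}=\sum_{k,n,l}\breve{w}_{k,n,l}$, and undo the ramification to obtain $\Bo_s u\in\Oo^{K,qK}(\hat{S}_d\times\hat{S}_{(d+\eta+2n\pi)/q})$; Theorem~\ref{th:4} then transfers the property to $\Bo_s v$.

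The main technical obstacle is this one-sided Vandermonde inversion. In the proof of Lemma~\ref{le:crucial_3} the disc-enlargement iteration relied simultaneously on the $\Phi$- and $\Psi$-side systems, whereas only the $\Psi$-side is initially given here. The assumption $P=P_{(1)}$ is precisely what makes the one-sided procedure viable: $M_2=0$ forces the $\Psi$-side Vandermonde to be square and solvable by a single inversion, in direct analogy with the algebraic extraction of $f'(\lambda_1 t)$ and $g'(\lambda_2 t)$ in the Example. The exponential-type bookkeeping proceeds as in the estimates inside the proof of Lemma~\ref{le:crucial_3}, with the iteration step replaced by a single application of Lemma~\ref{le:crucial}(b).
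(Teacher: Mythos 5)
Your ``if'' direction and the final transfer to $\Bo_s v$ via Theorem \ref{th:4} are fine and agree with the paper. The problem lies at the crux of your ``only if'' direction: the claim that the assumption $M_2=0$ ``forces the $\Psi$-side Vandermonde to be square and solvable by a single inversion''. It does not. In the $\tilde{\Lambda}$-system of Lemma \ref{le:crucial_3} the number of equations equals the number of $z$-traces prescribed by the Goursat condition, namely $M-J$, while the unknown dual data $\psi_{1kl}$ number $M_1$, which equals $M$ precisely when $M_2=0$; so for $j\geq 1$ the one-sided system you propose to invert is underdetermined, and the ``single inversion'' step fails. What could rescue your plan is something you mention but do not actually use: summability gives you the whole function $\breve{w}$, hence \emph{all} traces $\Psi_m(t)=\partial^m_{\Gamma_{1/\kappa},z}\breve{w}(t,0)$, $m\in\NN_0$, with the sectorial continuation and growth, not merely the $M-J$ traces of the Goursat data (this is exactly what happens in the Example, where $\partial_t w(t,0)$ and $\partial_z w(t,0)$ of the \emph{known} solution are both used). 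Taking as many traces as the full dual order of $Q$ yields a genuinely square confluent-Vandermonde system, whose invertibility for large $|\zeta|$ and compatibility with the exponential estimates would then still have to be verified; the role of $M_2=0$ is only to make every factor invertible so that the dual decomposition exhausts the solution. As written, your argument has a hole exactly at its pivotal step.

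It is also worth comparing with the paper's own proof of the hard implication, which is much shorter and avoids the ramification, the trace extraction and any Vandermonde inversion altogether. From $K$-summability one has $w:=\Bo_s u\in\Oo^{K}(\hat{S}_d\times D)$; by \cite[Theorem 1]{Mic8} $w=\sum_{k,l}w_{kl}$ with $(\partial_{\Gamma_q,t}-\lambda_{1k}(\partial_z))^{l}w_{kl}=0$ and, crucially, each piece $w_{kl}$ already belongs to $\Oo^{K}_{1,1/\kappa}(\hat{S}_d\times D)$; then \cite[Theorem 2]{Mic8} upgrades each piece to $\Oo^{K,qK}(\hat{S}_d\times\hat{S}_{(d+\eta+2n\pi)/q})$ for all $n\in\ZZ$ (here $P=P_{(1)}$ and $p=1$, i.e. $\arg\lambda_{1k}=\eta$ for all $k$, are used), and summing the pieces and invoking Theorem \ref{th:4} finishes the proof. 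So even a corrected version of your route would amount to re-deriving, through Lemmas \ref{le:crucial}--\ref{le:crucial_3}, facts that the paper simply cites.
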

\begin{proof}
 ($\Longrightarrow$) 
 We may assume that the operator $P(\partial_t,\partial_z)$ is factorised as
$$
P(\partial_t,\partial_z)=\prod_{k=1}^{m_1}(\partial_t-\lambda_{1k}(\partial_z))^{\alpha_{1k}},
$$
for some algebraic functions $\lambda_{1k}(\zeta)$ which are analytic functions of the variable 
 $\xi=\zeta^{1/\kappa}$ for $|\zeta|\geq r_0$ satisfying 
 $\lambda_{1k}(\zeta)\sim\lambda_{1k}\zeta^{q}$, $q=1+s>1$ and $\arg\lambda_{1k}=\eta$ for $k=1,...,m_1$.
 
 Since $u$ is $K$-summable in a direction $d$ we see that $w(t,z):=\Bo_su(t,z)\in\Oo^K(\hat{S}_d\times D)$. By \cite[Theorem 1]{Mic8} $w(t,z)=\sum_{k=1}^{m_1}\sum_{l=1}^{\alpha_{1k}}w_{kl}(t,z)$, where $w_{kl}(t,z)$ satisfies the equation
  \begin{equation*}
  \left\{
   \begin{array}{l}
    (\partial_{\Gamma_q,t}-\lambda_{1k}(\partial_{z}))^l w_{kl}(t,z)=0\\
     \partial_{\Gamma_q,t}^jw_{kl}(0,z)=0\quad\textrm{for}\quad j=0,...,l-2\\
    \partial_{\Gamma_q,t}^{l-1}w_{kl}(0,z)=\varphi_{kl}(z)\in\Oo_{1/\kappa}(D).     
   \end{array}
  \right.
\end{equation*}
By \cite[Theorem 1]{Mic8} also $w_{kl}(t,z)\in\Oo^K_{1,1/\kappa}(\hat{S}_d\times D)$.
It means also by \cite[Theorem 2]{Mic8} that $w_{kl}(t,z)\in\Oo^{K,qK}_{1,1/\kappa}(\hat{S}_d\times \hat{S}_{(d+\eta+2n\pi)/q})$ for $n\in\ZZ$.
Hence, finally we conclude that $w(t,z)\in\Oo^{K,qK}(\hat{S}_d\times \hat{S}_{(d+\eta+2n\pi)/q})$ for $n\in\ZZ$.
\bigskip\par
($\Longleftarrow$) The implication is given immediately by Theorem \ref{th:4}.
 
\end{proof}
\begin{Rem}
 Observe that in the special case when the Goursat problem reduces to the Cauchy problem, the above theorem  is a generalisation of the result of Lutz, Miyake and Sch\"afke \cite{L-M-S} for the heat equation, the result of Miyake \cite{Miy} for the equation $\partial_t^pu-\partial_z^qu=0$ with $p<q$,
 and the result of Ichinobe \cite{I} for the quasi-homogeneous equations. 
\end{Rem}
\bigskip\par
\section{Final remarks}
In this paper we make the first step in the study of the summable solutions of the Goursat problem. Our next aim will be 
to generalise the main theorem to the case when the Newton polygon of the operator $P(\partial_t,\partial_z)$ in the Goursat problem has several positive slopes.

In the future we also plan to find the similar conditions for multisummable solutions of the Goursat problem for the general linear partial differential equations with constant coefficients.
\bigskip\par
\bibliographystyle{siam}
\bibliography{summa}
\end{document}